\newtheorem{theorem}{Theorem}[section]
\newtheorem{corollary}[theorem]{Corollary}
\newtheorem{proposition}[theorem]{Proposition}
\theoremstyle{definition}
\newtheorem{example}[theorem]{Example}
\newtheorem{remark}[theorem]{Remark}
\newtheorem{definition}[theorem]{Definition}
\newtheorem{convention}[theorem]{Convention}
\newtheorem{notation}[theorem]{Notation}
\DeclareMathAlphabet{\mathpzc}{OT1}{pzc}{m}{it}   
\def\<{\langle}
\def\>{\rangle}
\def\0{\mathbf{0}}
\def\CC{{\mathbb C}}
\def\calH{{\mathcal H}}
\def\calK{{\mathcal K}}
\def\NN{{\mathbb N}}
\def\PP{{\mathbb P}}
\def\QQ{{\mathbb Q}}
\def\RR{{\mathbb R}}
\def\ZZ{{\mathbb Z}}
\def\pp{{\mathfrak p}}
\def\ba{{\breve{a}}}
\def\bA{{\breve{A}}}
\def\bC{{\breve{C}}}
\def\del{\partial}
\def\boldone{\boldsymbol{1}}
\def\boldzero{\boldsymbol{0}}
\def\ini{\operatorname{in}}
\DeclareFontFamily{U}{mathx}{\hyphenchar\font45}
\DeclareFontShape{U}{mathx}{m}{n}{<-> mathx10}{}
\DeclareSymbolFont{mathx}{U}{mathx}{m}{n}
\DeclareMathAccent{\widebar}{0}{mathx}{"73}
\def\qdeg{{\rm qdeg}} 
\def\conv{{\rm conv}}
\def\rank{{\rm rank }}
\def\Var{{\rm Var}}
\def\gr{{\rm gr}}
\DeclareMathOperator{\SingLocus}{{\operatorname{Sing}}}
\def\Sing#1{{\SingLocus({#1})}}
\DeclareMathOperator{\dSingLocus}{{\operatorname{Sing^1}}}
\def\dSing#1{{\dSingLocus({#1})}}
\def\charVar{{\operatorname{Char}}}
\def\minus{\smallsetminus}
\def\endrk{\hfill$\hexagon$}
\numberwithin{equation}{section}
\begin{document}
\title{
Singularities and holonomicity of 
binomial \texorpdfstring{$D$}{D}-modules 
} 

\author{Christine Berkesch Zamaere}
\address{School of Mathematics, University of Minnesota, Minneapolis, MN 55455.}
\email{cberkesc@math.umn.edu}
\thanks{CBZ was partially supported by NSF Grants DMS 1303083, DMS 0901123, and OISE 0964985.}

\author{Laura Felicia Matusevich}
\address{Department of Mathematics \\
Texas A\&M University \\ College Station, TX 77843.}
\email{laura@math.tamu.edu}
\thanks{LFM was partially supported by NSF Grants DMS 0703866 and DMS
  1001763 and a Sloan Research Fellowship} 

\author{Uli Walther}
\address{Department of Mathematics\\ Purdue University\\ West Lafayette, IN \ 47907.}
\email{walther@math.purdue.edu}
\thanks{UW partially supported by NSF Grant DMS~0901123}

\subjclass[2010]{Primary: 
32C38, 
14B05; 
Secondary: 
33C70, 
14M25. 
%
}

\begin{abstract}
We study binomial $D$-modules, which generalize $A$-hypergeo\-me\-tric
systems. We determine explicitly their singular loci and provide three
characterizations of their holonomicity.  The first of these
states that a binomial $D$-module is holonomic if and only if its
corresponding singular locus is proper.  The second characterization
is an equivalence of holonomicity and $L$-holonomicity for these
systems.  The third refines the second by giving more detailed
information about the $L$-characteristic variety of a non-holonomic
binomial $D$-module.
\end{abstract}
\maketitle

\setcounter{tocdepth}{1}
\section{Introduction}
\label{sec:intro}

\vspace*{-2mm}
The main object of study in this article is the class of  binomial $D$-modules, which generalize the $A$-hypergeometric system studied by Gelfand, Kapranov, and Zelevinsky~\cite{GGZ,GKZ,gkz irreducibility}. We recall the definitions of both objects in Definition~\ref{def:binomialD-mod}. 

Let $X$ be affine $n$-space over $\CC$, with coordinates $x_1,\dots,x_n$. The Weyl 
algebra $D$ is the ring of differential operators on $X$; it is generated by the
multiplication operators $x_1,\dots,x_n$ and the differentiation operators $\del_{1} := 
\frac{\del}{\del x_1},\dots,\del_{n} := \frac{\del}{\del x_n}$, subject to the Leibniz 
rule $\del_{j}x_i - x_i\del_{j}=\delta_{ij}$ (the Kronecker delta).

A \emph{projective weight vector on $D$} is
$L=(L_x,L_{\del})\in\QQ^{n}\times\QQ^n$ such that $L_x+L_{\del} =
c\cdot\boldone_n := c\cdot(1,\dots,1)$ for some constant $c>0$.  This
determines an increasing filtration $L$ on $D$ by $L^k D := \CC\cdot
\{x^u\del^v \mid L\cdot(u,v) \leq k\}$ for $k \in \QQ$.  Set $L^{<k} D
:= \bigcup_{\ell < k} L^{\ell} D$.  Since $c>0$, the associated graded
ring $\gr^L D$ is isomorphic to the coordinate ring of $T^*X\cong
\CC^{2n}$, which is a polynomial ring in $2n$ variables.  For any $P$
in $L^k D\minus L^{<k} D$, set $\ini_L(P):= P+L^{<k}D\in\gr^{L,k} D :=
L^k D / L^{<k} D \subseteq \gr^L D$ and $\deg^L(P) := k$.  By a slight
abuse of notation, set $x_i := \ini_L(x_i)$ and $\xi_i :=
\ini_L(\del_i)$, where $(x,\xi)$ are coordinates on $T^*X$.

For a left $D$-ideal $I$, set $\gr^L(I):= \<\ini_L(P)\mid P\in I\>\subseteq \gr^L(D)$. 
The \emph{$L$-characteristic variety} of the module $D/I$ is
\vspace{-1mm}
\begin{equation}
\label{eqn:charVar}
\charVar^L(D/I) := \Var(\gr^L(I)) \subseteq T^*X\cong\CC^{2n}.
\end{equation}
The projective weight vector $F = (\boldzero_n,\boldone_n) :=
(0,\dots,0,1,\dots,1)\in\QQ^{2n}$ induces the \emph{order filtration}
on $D$. The $F$-characteristic variety of a $D$-module is usually
called its characteristic variety. The \emph{singular locus} of $D/I$, denoted
$\Sing{D/I}$, 
is the projection of $\charVar^F(D/I)
\minus \Var(\xi_1,\dots,\xi_n)$ onto $X$, and as such, it is a closed subvariety of $X$. 

The \emph{divisorial singular locus} of $D/I$, denoted
by $\dSing{D/I}$, is the codimension at most one part of $\Sing{D/I}$.
From the point of view of holomorphic solutions
of systems of differential equations, 
there is no difference between $\Sing{D/I}$ and $\dSing{D/I}$ because
the codimension two  
singularities of holomorphic functions can be removed (see, for
instance,~\cite[Theorem~7.7]{scv}).

Our first main result generalizes to binomial $D$-modules
(Definition~\ref{def:binomialD-mod})  a result of
Gelfand and Gelfand that describes the divisorial singular loci of
$A$-hypergeometric systems with regular
singularities~\cite{gelfand-gelfand};  
Theorem~\ref{thm:singLocusA-hyp} contains their result as a special case. 
Theorem~\ref{thm:intro:binomialSing} will be later restated in more detail and proven as 
Corollary~\ref{cor:singLocus:holonomic:binomial}. 

\begin{theorem}
\label{thm:intro:binomialSing}
The divisorial singular locus of a holonomic binomial $D$-module is the
union of an explicit family of discriminantal varieties.  
\end{theorem}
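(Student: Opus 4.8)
\emph{Sketch of the intended proof.}
The plan is to reduce the statement, via the primary decomposition of the underlying binomial ideal, to the $A$-hypergeometric case already settled in Theorem~\ref{thm:singLocusA-hyp}, recognizing each relevant primary component as a GKZ system inflated along a coordinate projection. Throughout, write $M_A(I,\beta)$ for the binomial $D$-module of Definition~\ref{def:binomialD-mod} determined by an integer matrix $A$, a binomial ideal $I$, and a parameter $\beta$.

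\textbf{Step 1 (reduction to one toral component).} By Eisenbud--Sturmfels, $I = \bigcap_k C_k$ with each $C_k$ primary to a prime $I_{\rho,J} = I_\rho + \langle \partial_i \mid i \notin J\rangle$, where $J \subseteq \{1,\dots,n\}$, $L \subseteq \ZZ^J$ is a saturated sublattice, $\rho\colon L \to \CC^*$ a partial character, and $I_\rho$ the associated lattice ideal; following the theory of binomial $D$-modules (Dickenstein--Matusevich--Miller), these primes are \emph{toral} or \emph{Andean} according to whether $\dim \CC[\partial]/I_{\rho,J}$ equals $\rank A_J$. The Euler--Koszul machinery relates $M_A(I,\beta)$ to the $M_A(C_k,\beta)$ through a finite filtration whose successive quotients are the $M_A(C_k,\beta)$, and $M_A(I,\beta)$ is holonomic exactly when all Andean contributions vanish at $\beta$, i.e. $M_A(C_k,\beta) = 0$ for every Andean $C_k$. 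Since the singular locus of a $D$-module is the union of the singular loci of the subquotients of any finite filtration, $\Sing{M_A(I,\beta)} = \bigcup_{C_k \text{ toral}} \Sing{M_A(C_k,\beta)}$, and it suffices to treat a single toral $C = C_k$.

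\textbf{Step 2 (a toral component is an inflated GKZ system).} Let $C$ be primary to $I_{\rho,J}$. Since $L$ is saturated, $\ZZ^J/L$ is torsion-free, so $\rho$ extends to a homomorphism $\ZZ^J \to \CC^*$, and the induced monomial rescaling $x_j \mapsto \lambda_j x_j$ ($j \in J$) is an automorphism of $X$ fixing every operator $x_j\partial_j$, hence every Euler operator; it carries $C$ to an ideal primary to $I_{A_{(J)}} + \langle \partial_i \mid i \notin J\rangle$, where $A_{(J)}$ is an integer matrix with $\ker_\ZZ A_{(J)} = L$. Because $x_i\partial_i = x_i\cdot\partial_i$ lies in the left ideal generated by $\partial_i$, the Euler operators act on the cyclic generator of $M_A(C,\beta)$ through operators in the $J$-variables alone; invoking the structure theorem for toral binomial $D$-modules, one concludes that, after the rescaling, $M_A(C,\beta)$ is (up to a multiplicity irrelevant to the singular locus) an external product $H_{A_{(J)}}(\beta_J) \boxtimes \OO_{X_{\bar J}}$ of an $A_{(J)}$-hypergeometric system in the variables $\{x_j \mid j \in J\}$, with parameter $\beta_J$ determined by $\beta$ and $A$, and the structure sheaf of the complementary coordinate space. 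As the characteristic variety of an external product is the product of characteristic varieties and $\Sing{\OO_{X_{\bar J}}} = \varnothing$, this gives $\Sing{M_A(C,\beta)} = \pi_J^{-1}(\Sing{H_{A_{(J)}}(\beta_J)})$ for the coordinate projection $\pi_J\colon X \to X_J$, transported back by the inverse rescaling.

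\textbf{Step 3 (assembly) and the main obstacle.} A GKZ system is holonomic, so Theorem~\ref{thm:singLocusA-hyp} applies and describes the divisorial part of $\Sing{H_{A_{(J)}}(\beta_J)}$ as an explicit union of discriminantal varieties of faces of $\conv(A_{(J)})$; pulling back along a coordinate projection and rescaling coordinates send discriminantal varieties to discriminantal varieties, so the union over all toral $C_k$ of these loci is again an explicit family of discriminantal varieties, whose precise bookkeeping is the content of the detailed restatement Corollary~\ref{cor:singLocus:holonomic:binomial}. I expect the crux to be Step~1: establishing rigorously, through Euler--Koszul homology and the toral/Andean dichotomy, that for holonomic $M_A(I,\beta)$ precisely the toral primary components govern $\charVar^F$, and that passing to a single --- possibly non-radical and embedded --- primary component does not alter the divisorial singular locus. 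This is exactly where holonomicity is indispensable: for a non-holonomic binomial $D$-module the Andean strata enlarge the characteristic variety, and the description fails.
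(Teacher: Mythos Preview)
Your overall strategy coincides with the paper's: reduce via primary decomposition to toral pieces, recognize each toral piece (after rescaling and restricting to the variables in $J$) as an $A_{(J)}$-hypergeometric system, and invoke Theorem~\ref{thm:singLocusA-hyp}. The paper, however, does not build the filtration you describe; it simply applies Theorem~\ref{thm:binomial:charVar} (quoted from \cite{binomial-slopes}) with $L=F$, which already asserts that for holonomic $M$ the characteristic variety is the union of the $\charVar^F\big(D/(\sqrt{\mathscr{C}\,},E_A-\beta)\big)$ over toral $\mathscr{C}$ with $-\beta\in\qdeg(\CC[\del]/\mathscr{C})$. Projecting then yields Corollary~\ref{cor:singLocus:holonomic:binomial} immediately. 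Two things in your Step~1 need tightening if you want to reprove rather than cite this: first, the ``finite filtration whose successive quotients are the $M_A(C_k,\beta)$'' does not exist as stated---applying $\calH_0(E-\beta;-)$ to a filtration of $\CC[\del]/I$ yields long exact sequences with higher Euler--Koszul homology terms $\calH_i$, so one must control those as well (this is precisely the content of \cite[Theorem~4.3]{binomial-slopes}); second, ``$M_A(C_k,\beta)=0$ for Andean $C_k$'' is not the right formulation---what holonomicity gives is $-\beta\notin\qdeg(\CC[\del]/\mathscr{C})$ for Andean $\mathscr{C}$, from which one deduces vanishing of all $\calH_i$. Note also that the paper passes to the \emph{radicals} $\sqrt{\mathscr{C}\,}$ rather than the primary components themselves, which is what makes your Step~2 clean (a non-reduced toral primary component is not literally a rescaled $H_{A_{(J)}}(\beta_J)$, only its radical is); the passage to radicals is again absorbed into the citation of Theorem~\ref{thm:binomial:charVar}.
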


For a left $D$-ideal $I$, 
the fact that $\dim(\charVar^F(D/I)) \geq n$ is known as Bernstein's inequality~\cite{bernstein} (see also~\cite{smith-characteristic-variety}). This result motivates one
of the key notions in the theory of $D$-modules: holonomicity. 

\begin{definition}
\label{def:holonomic}
The $D$-module $D/I$ is
\emph{$L$-holonomic} if its $L$-characteristic
variety~\eqref{eqn:charVar} is empty or has dimension $n$. We say
$D/I$ is \emph{holonomic} if it is $F$-holonomic 
for $F = (\boldzero_n,\boldone_n)$; the \emph{rank} of $D/I$ is
defined to be $\rank(D/I):= \dim_{\CC(x)}\CC(x)\otimes_{\CC[x]}D/I$.
\end{definition}

The next result of this article is a characterization of the
holonomicity of a binomial $D$-module in terms of its singular locus, which will be proven in Section~\ref{sec:finiteRank}.  

\begin{theorem} 
\label{thm:main:binomial:sing} 
A binomial $D$-module $M$ is holonomic if and only if its singular locus 
$\Sing{M}$ is a proper subset of $X$. 
\end{theorem}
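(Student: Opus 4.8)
The plan is to prove both implications via the structure theory of binomial $D$-modules, using the singular locus as the bridge between the geometric and algebraic pictures. Throughout I would work with the divisorial singular locus, since by the remark after the definition of $\dSing{-}$ one has $\Sing{M} \neq X$ if and only if $\dSing{M} \neq X$ (the singular locus is always a proper-or-ambient closed set, and the only way $\Sing{M} = X$ is for a divisorial component to fill up $X$, i.e.\ for the ideal cutting out a codimension-$1$ piece to be zero). The easy direction is: if $M$ is holonomic, then $\Sing{M}$ is proper. This is essentially classical, but I would want to cite or reproduce the argument: for a holonomic $D/I$, the characteristic variety $\charVar^F(D/I)$ has dimension exactly $n$ and its components are conic Lagrangian subvarieties of $T^*X$; the zero section $\Var(\xi_1,\dots,\xi_n)$ is one of them whenever $D/I$ has positive rank, and each of the remaining components projects to a proper subvariety of $X$ because a Lagrangian whose projection dominates $X$ must contain an open dense subset of the zero section. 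Hence $\Sing M$, being the projection of the non-zero-section part, is a proper closed subset. Actually Theorem~\ref{thm:intro:binomialSing} already gives the stronger statement that $\dSing M$ is an explicit union of discriminantal varieties, which are proper; so one could simply invoke that.

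The substantive direction is the converse: $\Sing{M}$ proper $\implies$ $M$ holonomic. Here the plan is to argue by contrapositive and use the known combinatorial description of when a binomial $D$-module fails to be holonomic. From the work of Dickenstein--Matusevich--Miller on binomial $D$-modules, non-holonomicity is governed by the toral/non-toral decomposition of the underlying binomial ideal $I_\rho$ and by whether the parameter lies in a certain arrangement; concretely, $M$ is non-holonomic precisely when there is a non-toral primary component contributing, equivalently when some associated "Andean" component forces a positive-dimensional fiber in the characteristic variety away from the zero section. The key step is to show that this non-holonomicity is \emph{visible in codimension one} on $X$: if $M$ is not holonomic, then $\charVar^F(M)$ contains a component of dimension $>n$, and I would show such a component must project \emph{onto} $X$ (or at least onto a hypersurface whose union with the others is all of $X$), forcing $\Sing M = X$. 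The mechanism is that for binomial systems the non-holonomic locus comes from a sublattice direction in which the Euler operators impose no constraint, producing in $\gr^F(I)$ a component of the form $\Var(\text{binomials in }\xi) \times (\text{affine space in }x)$ that surjects onto $X$.

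The main obstacle, and the place where the binomial hypothesis does real work, is establishing that "non-holonomic" forces "singular locus equals $X$" rather than merely "singular locus large but proper." For a general $D$-ideal this is false, so the proof must exploit the rigid product-like structure of $\gr^F(I_{A,\rho})$: the Euler operators $\partial$-homogenize to linear forms $\sum a_{ij} x_i \xi_i$ in the associated graded, and the binomials $\partial^{u^+} - \partial^{u^-}$ homogenize to pure $\xi$-monomials or $\xi$-binomials, so every component of the characteristic variety is cut out by equations that are \emph{either} pure in $\xi$ \emph{or} bilinear. A component of dimension exceeding $n$ must then have its defining ideal "miss" enough $x$-variables that its projection to $X$ is everything or a coordinate-type subvariety; chasing which coordinate subspaces appear, and matching them against the discriminantal varieties from Theorem~\ref{thm:intro:binomialSing}, is the technical heart. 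I would organize this as: (1) reduce to a single toral primary component via the primary decomposition of $I_\rho$ and additivity of characteristic varieties; (2) on each such component, compute $\gr^F$ explicitly using the known $A$-graded structure; (3) read off that the projection to $X$ is proper exactly when the component has dimension $n$; (4) conclude. Steps (2)--(3) are where the real content lies, and I expect to lean on Section~\ref{sec:finiteRank}'s finite-rank criterion to control (3).
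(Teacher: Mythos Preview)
Your plan is considerably more elaborate than the paper's actual proof, and it misses the short route. The paper's argument in Section~\ref{sec:finiteRank} is: (i) for \emph{any} cyclic $D$-module $D/I$, finite rank is equivalent to properness of $\Sing{D/I}$ (Proposition~\ref{prop:mainSing:general}, a pure commutative-algebra argument about primes of $\gr^F(I)$ relative to $\langle\xi\rangle$); (ii) for binomial $D$-modules specifically, holonomicity is equivalent to finite rank by~\cite[Theorem~6.3]{DMM/D-mod}. Chaining these gives the theorem immediately. You mention the finite-rank criterion only as a tool to ``control step (3)'' inside a longer structural argument, but in fact it \emph{is} the entire proof once combined with the DMM equivalence. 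No analysis of Andean components, no projection of characteristic-variety components, and no discriminantal description is needed.

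Your contrapositive strategy---show that non-holonomic forces $\Sing{M}=X$ via the Andean structure---is essentially the paper's \emph{second} proof in Section~\ref{sec:holBinomialDmod}, and that proof only goes through under the extra hypothesis that $\boldone_n$ lies in the $\QQ$-rowspan of $A$ (needed to invoke the secondary polytope and principal $A$-determinant in Proposition~\ref{prop:singLocus=everything}). So as stated your plan would not cover the general case. There is also a slip in your step~(1): you write ``reduce to a single toral primary component,'' but the reduction for the non-holonomic direction is to an \emph{Andean} associated prime (the toral components are precisely the ones that behave well). Finally, your claim that a component of dimension $>n$ ``must project onto $X$'' is exactly the hard step, and your sketch of why (``equations either pure in $\xi$ or bilinear'') does not by itself force surjectivity of the projection; the paper's Section~\ref{sec:holBinomialDmod} argument needs the secondary-polytope dimension count to get this, and that is where the extra hypothesis enters.
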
 

In Section~\ref{sec:holBinomialDmod}, we use the description from
Theorem~\ref{thm:intro:binomialSing} to provide another proof of Theorem~\ref{thm:main:binomial:sing} in the case that $\boldone_n$ is in the $\QQ$-rowspan of $A$. 

Our final results are two additional characterizations of the holonomicity of a binomial $D$-module in terms of $L$-holonomicity. 

\begin{theorem} 
\label{thm:main:binomial:2} 
Let $M$ be a binomial $D$-module. 
\vspace{-2mm}
\begin{enumerate}
\item \label{item:Lhol}
	The module $M$ is holonomic if and only if 
	$M$ is $L$-holonomic for some (equivalently, 
	every) projective weight vector $L$ on $D$.  
\item \label{item:LcharVar}
	Furthermore, the module $M$ is not holonomic 
	if and only 
	if $\charVar^L(M)$ has a component in 
	$T^*(\CC^*)^n$ of dimension greater than $n$ 
	for some (equivalently, every) projective 
	weight vector $L$ on $D$.
\end{enumerate}
\end{theorem}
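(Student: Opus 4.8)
The plan is to reduce everything to the combinatorial/algebraic structure of binomial $D$-modules and to exploit the relationship between $L$-characteristic varieties for different weight vectors via Gröbner degeneration. First I would recall (from the definitions to be given in Definition~\ref{def:binomialD-mod}) that a binomial $D$-module $M = D/(I_A + (\text{binomials in } \partial) + (\text{Euler operators} - \beta))$ has a characteristic variety that is, up to the zero section, governed by the toric/binomial combinatorics of the matrix $A$ and the lattice defining the binomials. The key structural input is that $\gr^F(M)$ and more generally $\gr^L(M)$ can be analyzed through the associated binomial ideals: the symbol ideal contains the binomial ideal in the $\partial$-variables (which is independent of $L$ since those relations are already $F$-homogeneous of the appropriate form) together with the symbols of the Euler operators, whose $L$-leading forms depend on $L$ only in a controlled way because $L_x + L_\partial$ is a positive multiple of $\boldone_n$.

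For part~\eqref{item:Lhol}, one direction is immediate: holonomic means $F$-holonomic, so "holonomic $\Rightarrow$ $L$-holonomic for some $L$" is trivial by taking $L = F$. For the converse and the "every $L$" strengthening, I would argue that $\dim \charVar^L(M)$ is independent of $L$ among projective weight vectors. The cleanest route is a semicontinuity/flatness argument: pass to a one-parameter family of weights interpolating between $F$ and an arbitrary $L$ (for instance, use a weight of the form $tL + (1-t)F$, which remains projective since the constraint $L_x + L_\partial \in \RR_{>0}\boldone_n$ is convex), take the Gröbner degeneration, and observe that the relevant Hilbert function — hence the dimension of $\charVar$ — can only jump up under degeneration. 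Then one must rule out the jump; here the binomial structure is essential, because for these very special ideals the degeneration of the symbol ideal is again of the same combinatorial type, and its dimension is computed by the same toric data. Equivalently, one shows directly that for any projective $L$, $\charVar^L(M)$ has the same dimension as $\charVar^F(M)$ by computing both in terms of the dimensions of the toric strata attached to the faces of $A$ relative to the lattice, which do not see $L$.

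For part~\eqref{item:LcharVar}, the point is to locate where the "extra" dimension of a non-holonomic $M$ lives. By Bernstein's inequality every component of $\charVar^L(M)$ has dimension $\ge n$, so non-holonomicity means some component has dimension $> n$. I would show that the components contained in $\Var(\xi_1,\dots,\xi_n) \cup \bigcup_i \Var(x_i)$ — i.e., the ones not meeting $T^*(\CC^*)^n$ — always have dimension exactly $n$, so any excess must come from a component meeting the torus; conversely, combining Theorem~\ref{thm:main:binomial:sing} with the description of $\Sing{M}$ via the projection of $\charVar^F(M) \setminus \Var(\xi)$, non-holonomicity forces $\Sing{M} = X$, which forces a component of $\charVar^F(M)\setminus\Var(\xi)$ to project dominantly onto $X$ and thus to have dimension $> n$; tracking that this component can be taken inside $T^*(\CC^*)^n$ uses that the binomial relations in $\partial$ become invertible there, so the torus-part of the characteristic variety is itself a binomial variety whose dimension is read off combinatorially. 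The "every $L$" version then follows from part~\eqref{item:Lhol} together with the $L$-independence of the dimension of the torus-part, again via the degeneration argument.

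The main obstacle I anticipate is the second half of part~\eqref{item:Lhol}: showing that the dimension of $\charVar^L(M)$ genuinely does not jump as $L$ varies. Upper semicontinuity under Gröbner degeneration only gives an inequality in one direction, and the reverse inequality requires genuinely using the binomial/toric nature of the symbol ideal — one needs that the initial ideal $\ini_L(\gr^L(I))$ (or rather the relevant Gröbner degeneration of the full symbol ideal of $M$) remains equidimensional of the expected dimension, which is where the bulk of the combinatorial work (decomposing into the primary components indexed by faces of $A$ and cosets of the lattice, as in the structure theory of binomial $D$-modules) will be needed. I expect this to be handled by reducing to the case of a single toric component via the binomial primary decomposition and then computing dimensions of the resulting (twisted) toric varieties explicitly, verifying they are insensitive to $L$.
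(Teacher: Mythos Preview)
Your proposal has the right instinct---reduce via the primary decomposition of the binomial ideal---but it misses the structural dichotomy that actually drives the argument. The associated primes of an $A$-graded binomial ideal split into \emph{toral} and \emph{Andean} types, and by \cite[Theorem~6.3]{DMM/D-mod} holonomicity of $M$ is equivalent to no Andean component contributing at $\beta$. For a toral prime $\mathfrak p$ the module $D/(\mathfrak p, E_A-\beta)$ is, up to rescaling, a genuine $A$-hypergeometric system, and these are $L$-holonomic for \emph{every} projective $L$ by \cite{schulze-walther-duke}; combined with Theorem~\ref{thm:binomial:charVar} this already yields ``holonomic $\Rightarrow$ $L$-holonomic for all $L$'' with no degeneration needed. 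The non-holonomic case, however, is governed by an Andean prime, and there the reduced module is \emph{not} $A$-hypergeometric: after rescaling it has the form $D/(I_{\breve A}, E_A-\beta)$ with $\breve A$ a strictly larger matrix containing $A$---a toric ideal paired with \emph{too few} Euler operators. Your plan to ``reduce to a single toric component and compute dimensions of (twisted) toric varieties'' treats all components uniformly and never isolates this case, which is exactly where the work lies: one must exhibit, for each projective $L$, a piece of $\charVar^L(D/(I_{\breve A},E_A-\beta))$ of dimension $n+k-d>n$ (and, for part~\eqref{item:LcharVar}, one meeting $T^*(\CC^*)^n$). The paper does this by adapting the regular-sequence and spectral-sequence machinery of \cite{schulze-walther-duke} to facets of the $L$-umbrella of $\breve A$, together with a non-pyramid face argument to force the component into the torus.

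Your Gr\"obner/semicontinuity route also does not close as written. The standard walk \cite[Theorem~1.4.12]{SST} requires nonnegative entries, whereas projective $L$ here may have negative ones; the interpolation $tL+(1-t)F$ stays projective but passes through weights not covered by that result. More seriously, you yourself flag that semicontinuity gives only one inequality and then assert the reverse follows because the degenerated symbol ideal is ``of the same combinatorial type''---but $\gr^L(I_{\breve A})$ genuinely depends on $L$ through the $L$-umbrella $\Phi(\breve A,L)$, so this is false as stated. Finally, for part~\eqref{item:LcharVar} your claim that every component missing $T^*(\CC^*)^n$ has dimension exactly $n$ is both stronger than needed and unsubstantiated; the paper instead constructs a single large component inside $T^*(\CC^*)^n$ directly.
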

 
We prove Theorem~\ref{thm:main:binomial:2} in Section~\ref{sec:Lhol}, using ideas
from~\cite{schulze-walther-duke}. 
Note that~\cite[Theorem~1.4.12]{SST} uses a Gr\"obner walk
argument to show equivalence of holonomicity and $L$-holonomicity for
any cyclic module $D/I$, but with different assumptions on $L$ than we make
here:~\cite{SST} requires that all coordinates of $L$ are nonnegative 
and $L_x+L_{\del}>0$ coordinatewise, while we ask for projective
weight vectors, whose coordinate sums are positive, but whose individual entries may be negative. 

\subsection*{Acknowledgements}
We thank Timur Sadykov and Bernd Sturmfels for insightful
conversations related to this work.  
We also thank an anonymous referee for pointing out a gap that
affected some of our previous proofs; in this version, we use new
arguments.  
A portion of this work was done during the Institut Mittag-Leffler
program on Algebraic Geometry with a view towards applications and the
MSRI program on Commutative Algebra; we are grateful to the program
organizers and participants for creating great research atmospheres. 

\section{\texorpdfstring{$A$}{A}-hypergeometric systems and binomial \texorpdfstring{$D$}{D}-modules}
\label{sec:binomial}

In this section, we introduce the $A$-hypergeometric systems and their 
generalizations, called binomial $D$-modules~\cite{DMM/D-mod}, and
we recall the structure of their $L$-characteristic varieties.

\begin{convention}
\label{conv:A}
Throughout this article, $A = [\,a_1\, a_2 \, \cdots\, a_n\,]$ is an integer 
$d\times n$ matrix whose columns span $\ZZ^d$ as a lattice. 
We also require that $A$ be \emph{pointed}, meaning that there exists
$h\in \QQ^d$ such that $h\cdot a_i >0$ for $i=1,\dots,n$. 
\end{convention}

The matrix $A$ determines a $(\CC^*)^d$-action on $X$ by 
\[
t \diamond p = (t^{a_1}p_1,\dots,t^{a_n}p_n) 
\ \text{ for } t=(t_1,\dots,t_d) \in
(\CC^*)^d \text{ and } p=(p_1,\dots,p_n) 
\in X.
\]
This action passes to the Weyl algebra $D$ via
\[
t \diamond x_i := t^{a_i}x_i \quad \text{ and } \quad t \diamond \del_{i} =
t^{-a_i}\del_{i} \quad \text{ for } i=1,\dots,n.
\]

\begin{definition}
\label{def:binomialD-mod}
Let $A=[a_{ij}]$ be as in Convention~\ref{conv:A}. We define the
\emph{Euler operators for A} to be
\begin{equation}
E_i := \sum_{j=1}^n a_{ij} x_j \del_{j} \quad \text{for }i=1,\dots,d.
\end{equation}
We denote by $E_A$ the sequence $E_1,\dots,E_d$, and for $\beta
\in\CC^d$, we denote by $E_A-\beta$ the sequence  $E_1-\beta_1,\dots,E_d-\beta_d$.
Let $I \subseteq \CC[\del_{1},\dots,\del_{n}]= \CC[\del]$ be a binomial
ideal, that is, an ideal generated by binomials and monomials. We
assume that $I$ is equivariant with respect to the $(\CC^*)^d$-action 
on $\CC[\del]$ induced by $A$.
Given $\beta \in \CC^d$, a \emph{binomial $D$-module} is of the form 
\[
\frac{D}{(I, E_A-\beta)}
:= \frac{D}{D \cdot I+D\cdot( E_A-\beta)}.
\]
\end{definition}

\begin{definition}
\label{def:A-hyp}
The binomial ideal
\begin{equation}
\label{eqn:toric}
I_A := \< \del^u - \del^v \mid  
u,v \in \NN^n,\, Au=Av\> \subseteq \CC[\del]
\end{equation}
is called the \emph{toric ideal} associated to $A$. Here we take the
convention that $\NN$ denotes the natural numbers including zero.  
The left $D$-ideal 
\[
H_A(\beta) := D \cdot (I_A, E_A-\beta)
\] 
is called an \emph{$A$-hyper\-geo\-met\-ric system}, and 
$D/H_A(\beta)$ is called an \emph{$A$-hyper\-geo\-met\-ric $D$-module}. 
\end{definition}

Every associated prime of a binomial ideal is also binomial, and every
prime binomial ideal is isomorphic to a toric ideal  
up to a rescaling of the variables~\cite{eisenbud-sturmfels}. 
Thus, toric ideals can be thought of as the building blocks for all binomial ideals. 
In~\cite{DMM/D-mod}, $A$-hypergeometric systems are shown to play a
similar fundamental role in the structure of binomial $D$-modules.
We now recall the characterization of holonomicity for binomial 
$D$-modules from~\cite{DMM/D-mod}. 

The action of the torus $(\CC^*)^d$ on $\CC[\del]$ defines a
multigrading on this ring, called the \emph{$A$-grading}, with
$\deg(x_i)=-\deg(\del_{i}) := a_i$. A binomial ideal $I\subseteq \CC[\del]$ is
torus equivariant if and only if it is $A$-graded. If
$M=\bigoplus_{\alpha \in \ZZ^d} M_\alpha$ is an $A$-graded
$\CC[\del]$-module, then the set of \emph{quasidegrees} of $M$ is
\[
\qdeg(M) := \overline{ \{ \alpha \in \ZZ^d \mid M_\alpha \neq 0 \}
}^{\rm Zariski} \subseteq \CC^d,
\]
where the closure is taken in the Zariski topology of $\CC^d$. 

Let $\mathscr{C}$ be a primary component of an $A$-graded binomial ideal 
$I \subseteq \CC[\del]$, which can be chosen to be binomial
by~\cite{eisenbud-sturmfels}. If the $A$-graded Hilbert function of
$\CC[\del]/\sqrt{\mathscr{C}\;}$ is bounded, then the component
$\mathscr{C}$, along with its corresponding associated prime 
$\sqrt{\mathscr{C}\;}$, is called \emph{toral}. Otherwise, they are
called \emph{Andean}. Examples and more details can be found 
in~\cite{DMM/primdec}. 

\begin{theorem}\cite[Theorem~6.3]{DMM/D-mod}
\label{thm:binomialHolonomicity}
Let $I\subseteq \CC[\del]$ be an $A$-graded binomial ideal. The binomial $D$-module 
$D/(I, E_A-\beta)$ is holonomic if and only if $-\beta$ lies outside the union of 
the sets $\qdeg(\CC[\del]/\mathscr{C})$, running over the Andean
components $\mathscr{C}$ of the binomial ideal $I$. 
\hfill$\square$
\end{theorem}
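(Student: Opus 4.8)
The plan is to prove this via Euler--Koszul homology, following~\cite{DMM/D-mod}. To a sequence $E_A-\beta$ of commuting Euler operators acting on an $A$-graded $\CC[\del]$-module $N$ one attaches the Euler--Koszul complex $K_\bullet(E_A-\beta;N)$ of left $D$-modules; its homology $H_\bullet(E_A-\beta;N)$ satisfies $H_0(E_A-\beta;\CC[\del]/I)=D/(I,E_A-\beta)$, and a short exact sequence of $A$-graded $\CC[\del]$-modules induces a long exact sequence in $H_\bullet$. Before anything else I would record two standing tools: by Bernstein's inequality~\cite{bernstein} a finitely generated $D$-module is holonomic exactly when its characteristic variety has dimension at most $n$ (equivalently, is empty), and holonomicity is preserved under passing to submodules and quotients and is closed under extensions; and $\qdeg(\CC[\del]/I)=\bigcup_j\qdeg(\CC[\del]/\mathscr{C}_j)$ for any binomial primary decomposition $I=\bigcap_j\mathscr{C}_j$~\cite{eisenbud-sturmfels}. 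I would also use the Euler--Koszul vanishing criterion: $H_\bullet(E_A-\beta;N)=0$ whenever $-\beta\notin\qdeg(N)$.

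Next I would reduce to $I$ primary. Fixing an $A$-graded binomial primary decomposition $I=\bigcap_{j=1}^r\mathscr{C}_j$ and setting $\mathscr{D}:=\bigcap_{j\ge 2}\mathscr{C}_j$, I would induct on $r$ using the Mayer--Vietoris sequence
\[
0\to \CC[\del]/I \to \CC[\del]/\mathscr{C}_1\oplus\CC[\del]/\mathscr{D}\to \CC[\del]/(\mathscr{C}_1+\mathscr{D})\to 0
\]
and the long exact sequence it induces in Euler--Koszul homology. The rightmost term is $A$-graded and binomial with strictly smaller support; using $\qdeg(\CC[\del]/(\mathscr{C}+\mathscr{C}'))\subseteq\qdeg(\CC[\del]/\mathscr{C})\cap\qdeg(\CC[\del]/\mathscr{C}')$ I would check that it introduces no Andean quasidegrees beyond those already present among the $\mathscr{C}_j$, and I would dispose of it by a nested induction on the dimension of its support. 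The long exact sequence then propagates holonomicity: $D/(I,E_A-\beta)=H_0$ is holonomic as soon as every $H_\bullet(E_A-\beta;\CC[\del]/\mathscr{C}_j)$ is, and, by localizing at the cellular variables of an offending Andean component to prevent cancellation, one sees directly that $D/(I,E_A-\beta)$ is non-holonomic whenever $-\beta$ meets the Andean quasidegrees of some $\mathscr{C}_j$.

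For $I=\mathscr{C}$ primary I would first pass to the radical: writing $\sqrt{\mathscr{C}}=I_{\rho,J}$ for a subset $J\subseteq\{1,\dots,n\}$ and a partial character $\rho$ on a sublattice of $\ZZ^J$, a finite $A$-graded filtration of $\CC[\del]/\mathscr{C}$ with successive quotients presented over $\CC[\del]/I_{\rho,J}$ reduces matters to $\mathscr{C}=I_{\rho,J}$. If this prime is \emph{toral}, then after a monomial rescaling of the variables $D/(I_{\rho,J},E_A-\beta)$ becomes a rescaled hypergeometric system for the pointed submatrix of $A$ on the columns in $J$, hence holonomic for every $\beta$ by the Gelfand--Kapranov--Zelevinsky/Adolphson holonomicity theorem~\cite{GKZ}; since toral components do not enter the Andean union, this is consistent. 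If the prime is \emph{Andean}, then $\qdeg(\CC[\del]/I_{\rho,J})$ is a finite union of translates of a positive-dimensional complex subspace of $\CC^d$; when $-\beta$ lies outside it the vanishing criterion gives $D/(\mathscr{C},E_A-\beta)=0$, trivially holonomic, whereas when $-\beta$ lies in it I would compute $\charVar^F(D/(\mathscr{C},E_A-\beta))$ and exhibit a component of dimension strictly greater than $n$, so that Bernstein's inequality fails and the module is not holonomic. Combining these two cases with the reduction step yields both implications.

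The hard part will be the Andean primary case: pinning down $\charVar^F(D/(\mathscr{C},E_A-\beta))$ precisely enough to see that it acquires a component of dimension exceeding $n$ exactly when $-\beta$ lands on the Andean arrangement — this requires understanding how the Euler relations degenerate along the positive-dimensional Andean directions, together with a careful dimension count on the associated graded of the defining left ideal with respect to the order filtration. A secondary technical point, routine but delicate, is the Mayer--Vietoris bookkeeping: verifying that the overlap modules $\CC[\del]/(\mathscr{C}+\mathscr{C}')$ contribute no new Andean quasidegrees and that the relevant ranks add along the filtrations.
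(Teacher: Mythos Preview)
The paper provides no proof of this statement: it is quoted from \cite[Theorem~6.3]{DMM/D-mod} and closed with a $\square$. Your sketch is essentially the argument of \cite{DMM/D-mod} itself---Euler--Koszul homology (from \cite{MMW}), Mayer--Vietoris reduction to primary components, the toral/Andean dichotomy, and GKZ/Adolphson holonomicity for the toral pieces---so there is nothing in the present paper to compare it against. One minor divergence from the cited source: in the Andean primary case, \cite{DMM/D-mod} does not establish non-holonomicity by directly computing $\charVar^F$; rather, it exhibits an uncountable-dimensional family of solutions (parametrized by the Andean directions), forcing infinite rank and hence non-holonomicity via Kashiwara's theorem. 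Your proposed route via a direct dimension count on the characteristic variety is closer in spirit to what the present paper carries out later in Section~\ref{sec:Lhol} (Propositions~\ref{prop:Lhol-viaCtau} and~\ref{prop:Lhol-GKZ-viaOrderFiltration}) and would also succeed.
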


We next reproduce results of Schulze and
Walther~\cite{schulze-walther-duke} that set-theoretically describe 
the $L$-characteristic variety of $D/H_A(\beta)$.

Let $A$ and $h=(h_1,\dots,h_d)$ be as in Convention~\ref{conv:A}, and
let $L=(L_x,L_\del) \in \QQ^{2n}$ be a projective weight vector on $D$. 
Choose $\varepsilon>0$ such that $h\cdot a_i +\varepsilon\, L_{\del_i} >
0$ for $i=1,\dots,n$, and denote by $H_\varepsilon$ the hyperplane in $\PP^d_{\QQ}$
given by $\{ (y_0:y_1:\cdots:y_d) \in \PP^d_\QQ \mid \varepsilon \, y_0
+ h_1 y_1+\cdots +h_d y_d = 0\}$. 
The \emph{$L$-polyhedron of $A$} is the convex hull of $\{(1:\boldzero_d),
(L_{\del_1}:a_1),\dots,(L_{\del_n}:a_n)\}$ in the affine space $\PP^{d}_\QQ \minus H_\varepsilon$.
The \emph{$L$-umbrella of $A$}, denoted $\Phi(A,L)$, is the set of
faces of the $L$-polyhedron of $A$ that do not contain $(1:\boldzero_d)$. 

Let $\tau \in \Phi(A,L)$. We identify $\tau$ with 
the subset of $\{1,\dots,n\}$ indexing the columns 
of $A$ belonging to $\tau$. 
Whenever it is convenient, we also identify 
$\tau$ with the set $\{a_i\mid i \in \tau\}$, as well as with the
matrix whose columns are $a_i$ for $i\in \tau$. We denote by
$\bar{\tau}$ the set $\{1,\dots,n\} \minus \tau$.

Let $C_\tau$ denote the conormal space to the orbit under the torus
action of the point $\boldone_\tau$ in $\CC^n$ whose coordinates
indexed by $\tau$ are equal to $1$ and those indexed by $\bar{\tau}$
are equal to $0$.  Writing $x\xi := (x_1\xi_1,\dots,x_n\xi_n)$ and $\xi_\tau := \prod_{j \in \tau} \xi_j$, 
the Zariski closure of $C_\tau$, denoted $\overline{C_\tau}$, is equal to the Zariski closure in $T^*(\CC^n)$
of the variety in 
$T^*(\CC^n)\minus\Var(\xi_\tau)$ 
defined by 
\begin{align}
\label{eq:CtauClosure}
\CC[\xi_\tau^{-1}]\otimes_{\CC[\xi_\tau]} \left(
\<\xi_i\mid i \notin \tau\> + \<\xi^u-\xi^v \mid
u,v \in \NN^n , u_i=v_i =0 \text{ for } i \notin \tau, Au=Av \>+\<Ax\xi\>
\right). 
\end{align}
Note that the polynomials in~\eqref{eq:CtauClosure} can be viewed as
to not involve the variables $x_i$ for
$i\notin \tau$; in particular, if 
$(\overline{x},\overline{\xi})
\in \overline{C_\tau}$, then
$(\CC^{\bar{\tau}}\times 
\overline{x}_\tau) \times 
\{\overline{\xi}\}\subseteq \overline{C_\tau}$. 
Here $\CC^{\bar{\tau}}$ denotes the affine subspace of $X$ whose
coordinates indexed by $\tau$ are zero, and  
$\overline{x}_\tau$ is the point in $\CC^\tau$ whose coordinates
indexed by $\tau$ coincide with those of  
$\overline{x}$. 

\begin{theorem}\cite[see Corollary~4.17]{schulze-walther-duke}
\label{thm:A-hyp:charVar}
The $L$-characteristic variety of $D/H_A(\beta)$ is 
\[
\charVar^L(D/H_A(\beta)) 
= \bigsqcup_{\tau \in \Phi(A,L)} C_\tau 
= \bigcup_{\tau\in \Phi(A,L)} \overline{C_\tau}.
\]
\end{theorem}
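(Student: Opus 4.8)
\textbf{Proof proposal for Theorem~\ref{thm:A-hyp:charVar}.}

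The plan is to compute $\gr^L(H_A(\beta))$ explicitly enough to identify its zero set with $\bigsqcup_{\tau}C_\tau$, exploiting the $A$-grading and the combinatorics of the $L$-umbrella. First I would observe that the problem is stable under the torus action: the ideal $H_A(\beta)$ is $A$-homogeneous up to the shift by $\beta$, and the associated graded $\gr^L(H_A(\beta))$ is genuinely $A$-homogeneous, so $\charVar^L(D/H_A(\beta))$ is a $(\CC^*)^d$-stable subvariety of $T^*X \cong \CC^{2n}$. Each torus orbit closure meets a coordinate stratum $\{x_i = 0 : i \notin \tau\} \cap \{x_i \neq 0 : i \in \tau\}$ for a unique subset $\tau \subseteq \{1,\dots,n\}$, so it suffices to analyze $\charVar^L(D/H_A(\beta))$ stratum by stratum and show that the stratum indexed by $\tau$ contributes exactly $C_\tau$ when $\tau \in \Phi(A,L)$ and nothing otherwise.

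Next I would pin down which initial forms appear. The ideal $H_A(\beta)$ is generated by the toric binomials $\partial^u - \partial^v$ (with $Au = Av$) and the Euler operators $E_i - \beta_i$. For the Euler operators, $\deg^L(E_i - \beta_i)$ is governed by $L_x + L_\partial = c\boldone_n > 0$, so $\ini_L(E_i) = \sum_j a_{ij} x_j\xi_j$, giving the $\langle Ax\xi\rangle$ piece; the $\beta_i$ drop out because $c > 0$ forces $\deg^L(x_j\partial_j) = c > 0 = \deg^L(\beta_i)$. For a toric binomial $\partial^u - \partial^v$, its $L$-degree is $\max(L_\partial\cdot u, L_\partial \cdot v)$, and its initial form is $\xi^u$, $\xi^v$, or $\xi^u - \xi^v$ according to which of $L_\partial\cdot u$, $L_\partial\cdot v$ is larger; this is precisely where the $L$-umbrella enters, since $\Phi(A,L)$ records, for each face $\tau$ of the $L$-polyhedron, which monomials $\xi^u$ survive. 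The facewise description in~\eqref{eq:CtauClosure} — the vanishing of $\xi_i$ for $i \notin \tau$, the toral binomials supported on $\tau$, and $Ax\xi$ — is then exactly the ideal cut out by these initial forms on the stratum of $\tau$, once one localizes at $\xi_\tau$ to stay in the torus orbit where $C_\tau$ lives. This identification is essentially the content of~\cite[Corollary~4.17]{schulze-walther-duke}, and I would invoke their computation of $\gr^L(H_A(\beta))$ rather than redo it.

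Finally, I would assemble the pieces: take the union over $\tau \in \Phi(A,L)$ of the Zariski closures $\overline{C_\tau}$ to get $\charVar^L(D/H_A(\beta))$ as a closed set, and separately check that the $C_\tau$ are pairwise disjoint locally closed strata whose disjoint union is the same variety — the disjointness is immediate since distinct $\tau$ give distinct coordinate strata via the $x$-support, and the fact that $\overline{C_\tau}\setminus C_\tau$ is covered by $C_{\tau'}$ for proper subfaces $\tau' \subsetneq \tau$ of the umbrella is a combinatorial consequence of taking boundary faces of the $L$-polyhedron. The main obstacle I anticipate is the careful bookkeeping in the second step: making sure that passing to initial forms does not lose or gain components, i.e.\ that a Gröbner-type argument shows $\ini_L(H_A(\beta))$ is generated by the initial forms of the natural generators (after saturating appropriately), and that the localization at $\xi_\tau$ in~\eqref{eq:CtauClosure} correctly isolates $C_\tau$ without spurious components supported on $\Var(\xi_\tau)$ — which is why the statement is phrased using both the disjoint union of the $C_\tau$ and the union of their closures.
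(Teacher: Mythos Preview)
The paper does not prove this theorem; it is quoted verbatim as a result from Schulze--Walther \cite[Corollary~4.17]{schulze-walther-duke} and used as a black box, so there is no proof in the paper to compare your proposal against. Your sketch is a reasonable outline of the strategy in \cite{schulze-walther-duke} --- computing $\gr^L(H_A(\beta))$ by taking initial forms of the generators, stratifying by faces of the $L$-umbrella, and assembling the conormals --- and indeed you explicitly say you would ``invoke their computation of $\gr^L(H_A(\beta))$ rather than redo it,'' so your proposal is effectively also a citation.

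One genuine slip: your stratification and disjointness argument is stated in terms of the $x$-coordinates, but $C_\tau$ is \emph{not} determined by its $x$-support. As the paper notes just after~\eqref{eq:CtauClosure}, the variables $x_i$ for $i\notin\tau$ are unconstrained on $\overline{C_\tau}$. The correct stratification is by $\xi$-support: on $C_\tau$ one has $\xi_i\neq 0$ for $i\in\tau$ and $\xi_i=0$ for $i\notin\tau$ (this is exactly what the localization at $\xi_\tau$ in~\eqref{eq:CtauClosure} encodes), and that is what makes the $C_\tau$ pairwise disjoint. With the roles of $x$ and $\xi$ swapped, your assembly step in the last paragraph goes through.
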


We conclude this section with a result from~\cite{binomial-slopes} that describes the
$L$-characteristic variety of a holonomic binomial $D$-module. 

\begin{theorem}\cite[Theorem~4.3]{binomial-slopes}
\label{thm:binomial:charVar} 
\, 
If the binomial $D$-module $M = D/(I, E_A-\beta)$ is 
holonomic, then the $L$-characteristic variety of $M$ 
is the union of the $L$-characteristic varieties of 
the binomial modules $D/(\sqrt{\mathscr{C}\;}, E_A-\beta)$, 
where the union runs over the toral primary components $\mathscr{C}$
of $I$ such that $-\beta \in \qdeg(\CC[\del]/\mathscr{C})$.
\end{theorem}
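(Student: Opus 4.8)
The plan is to reduce the statement to the $A$-hypergeometric case settled by Theorem~\ref{thm:A-hyp:charVar}, using binomial primary decomposition together with the exactness properties of Euler--Koszul homology from \cite{DMM/D-mod} and the additivity of the $L$-characteristic variety on short exact sequences of $D$-modules. First I would fix a binomial primary decomposition $I = \bigcap_j \mathscr{C}_j$, available by \cite{eisenbud-sturmfels} and refined in \cite{DMM/primdec}, and split the components into toral and Andean ones. Since $M$ is holonomic, Theorem~\ref{thm:binomialHolonomicity} gives $-\beta \notin \qdeg(\CC[\del]/\mathscr{C}_j)$ for every Andean component; by \cite{DMM/D-mod} this forces the Euler--Koszul homology $H_\bullet(E_A-\beta;\, \CC[\del]/\mathscr{C}_j)$ of those components to vanish. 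Feeding the short exact sequences obtained from the primary decomposition (e.g.\ $0 \to \CC[\del]/(I_1\cap I_2) \to \CC[\del]/I_1 \oplus \CC[\del]/I_2 \to \CC[\del]/(I_1+I_2) \to 0$, iterated) into the long exact sequence in Euler--Koszul homology, the Andean pieces contribute nothing to $\charVar^L(M)$, so one is reduced to the intersection $I^{\mathrm{tor}}$ of the toral components.

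Next I would pass from toral primary components to their radicals. Each toral $\mathscr{C}_j$ is $\sqrt{\mathscr{C}_j\,}$-primary, so $\CC[\del]/\mathscr{C}_j$ has a finite filtration whose graded pieces are $A$-graded shifts of $\CC[\del]/\sqrt{\mathscr{C}_j\,}$; assembling these with the exact sequences above yields a finite filtration of the module attached to $I^{\mathrm{tor}}$ whose successive quotients, after applying $H_0(E_A-\beta;-)$, are binomial $D$-modules of the form $D/(\sqrt{\mathscr{C}_j\,},\, E_A-\beta')$ for various parameters $\beta'$. All of these are holonomic (the $A$-hypergeometric building blocks are holonomic, and holonomicity is inherited along the filtration), so $\charVar^L$ is additive on the induced short exact sequences, and $\charVar^L(M)$ becomes the union of the $\charVar^L(D/(\sqrt{\mathscr{C}_j\,}, E_A-\beta'))$. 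Two simplifications then complete the reduction: the parameter is irrelevant, since Theorem~\ref{thm:A-hyp:charVar} shows that $\charVar^L$ of an $A$-hypergeometric system depends only on the $L$-umbrella, hence only on the matrix and on $L$; and a toral component with $-\beta \notin \qdeg(\CC[\del]/\mathscr{C}_j)$ may be dropped, because for a toral module $N$ one has $H_0(E_A-\beta; N) = 0$ precisely when $-\beta \notin \qdeg(N)$.

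It remains to identify $\charVar^L(D/(\sqrt{\mathscr{C}\,}, E_A-\beta))$ for a single toral prime $\sqrt{\mathscr{C}\,}$. By the classification of binomial primes \cite{eisenbud-sturmfels}, after rescaling the variables $\del_i$ (which leaves the Euler operators unchanged) and accounting for the coordinate subspace cut out by the $\del_i$ outside the support, $\sqrt{\mathscr{C}\,}$ becomes the toric ideal of a submatrix of $A$; this identifies $D/(\sqrt{\mathscr{C}\,}, E_A-\beta)$ with an honest $A'$-hypergeometric $D$-module up to an automorphism of $D$, and Theorem~\ref{thm:A-hyp:charVar} computes its $L$-characteristic variety via the $L$-umbrella of $A'$. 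Collecting the surviving contributions over the toral components $\mathscr{C}$ with $-\beta \in \qdeg(\CC[\del]/\mathscr{C})$ gives the stated formula. The hard part will be the middle step: showing that replacing a primary component by its radical, while tolerating the intervening $A$-graded shifts, changes neither the set nor the dimensions of the components of $\charVar^L$. This requires controlling the higher Euler--Koszul homology along the filtration, so that $\charVar^L$ is genuinely additive, and not merely subadditive, on the short exact sequences that appear; it is precisely here that the holonomicity of $M$, hence of every surviving toral piece, is indispensable.
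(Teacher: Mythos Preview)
The paper does not give a proof of this statement: Theorem~\ref{thm:binomial:charVar} is quoted verbatim from \cite[Theorem~4.3]{binomial-slopes} and is used as a black box, with no argument supplied. So there is nothing in the present paper to compare your proposal against.

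That said, your outline is essentially the argument that appears in \cite{binomial-slopes}, which in turn builds on \cite{DMM/D-mod} and \cite{MMW}: binomial primary decomposition, elimination of Andean pieces via the quasidegree criterion and vanishing of Euler--Koszul homology, filtration of toral primary pieces by $A$-graded shifts of their radicals, and parameter-independence of $\charVar^L$ for the resulting $A$-hypergeometric blocks via Theorem~\ref{thm:A-hyp:charVar}. One small clarification: your closing worry about ``additive versus subadditive'' is slightly off-target. For any short exact sequence $0\to M'\to M\to M''\to 0$ of coherent $D$-modules one has $\charVar^L(M)=\charVar^L(M')\cup\charVar^L(M'')$ unconditionally; holonomicity plays no role there. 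The genuine issue is one step earlier: applying $H_0(E_A-\beta;-)$ to a short exact sequence of $\CC[\del]$-modules yields only a long exact sequence of $D$-modules, so one must control the higher Euler--Koszul homology of the toral pieces. This is exactly what \cite{MMW} and \cite{DMM/D-mod} provide: for toral modules all $H_i(E_A-\beta;-)$ are holonomic and their $L$-characteristic varieties coincide with that of $H_0$, which is what makes the reduction to radicals go through.
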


\begin{remark}
\label{rmk:first part of item 2}
Note that Theorem~\ref{thm:binomial:charVar} in particular states that
if a binomial $D$-module is holonomic, then it is $L$-holonomic for
all projective weight vectors $L$. This follows because
$A$-hypergeometric $D$-modules, such as the modules
$D/(\sqrt{\mathscr{C}\;}, E_A-\beta)$ for $\mathscr{C}$ toral as
above, are always $L$-holonomic. This observation proves one of the
implications of Theorem~\ref{thm:main:binomial:sing}.
\endrk
\end{remark}

\section{Finite rank, singular loci, and holonomicity}
\label{sec:finiteRank}

In this section, we prove Theorem~\ref{thm:main:binomial:sing}, which relates holonomicity to the singular locus of a binomial $D$-module. 
We begin by pointing out that for an arbitrary left $D$-ideal $I$, the holonomicity of $D/I$
implies the properness of its 
singular locus~\cite[Subsection~5.4.6]{bjork}.  
That the converse holds is a special feature of binomial $D$-modules. 

\begin{example}
\label{example:non-holonomic:properSing}
Over $X=\CC^2$, the module $D/\<x_1\>$ is not holonomic because its
characteristic variety is a hypersurface in $\CC^4$. However,
$\Sing{D/\<x_1\>}= \Var(x_1)$, and as such, it is a proper subset of
$X$. For other examples of non-holonomic modules with proper singular loci, see Example~\ref{ex:nonholHorn-properSing} and~\cite[Example~1.4.10]{SST}. 
\endrk
\end{example}

As shown by Kashiwara, holonomicity implies finite rank (see~\cite[Proposition~1.4.9]{SST}).  
Note that although the module $D/\<x_1\>$ in
Example~\ref{example:non-holonomic:properSing} is not holonomic, it is
of finite rank. (In fact, it has rank $0$.)  
We show now that for an arbitrary cyclic module $D/I$, the finite rank
condition is equivalent to the properness of $\Sing{D/I}$. 

\begin{proposition}
\label{prop:mainSing:general}
If $I$ is a left $D$-ideal, then $D/I$ is of finite rank if and only if 
$\Sing{D/I}$ is a proper subset of $X$. 
\end{proposition}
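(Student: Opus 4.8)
The plan is to relate both conditions to the characteristic variety $\charVar^F(D/I)$, using the fact that generic rank is controlled by the fiber of the characteristic variety over a generic point of $X$. First I would recall the standard fact (see \cite[\S1.4]{SST} or \cite{bjork}) that $\rank(D/I) = \dim_{\CC(x)} \CC(x)\otimes_{\CC[x]} D/I$ is finite if and only if the $\CC(x)$-module $\CC(x)\otimes_{\CC[x]}\gr^F(D/I)$ is finite-dimensional, which in turn happens if and only if the projection $\charVar^F(D/I) \to X$ has finite fibers over a Zariski-dense open subset of $X$; equivalently, $\charVar^F(D/I)$ has no component that dominates $X$ other than the zero section $\Var(\xi_1,\dots,\xi_n)$ — wait, more precisely, the only components mapping onto $X$ with positive-dimensional generic fiber are forbidden, while the zero section itself contributes $0$ to the rank and is harmless.

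Then I would argue as follows. Suppose $\Sing{D/I}$ is not a proper subset of $X$, i.e. $\Sing{D/I} = X$. By definition $\Sing{D/I}$ is the closure of the projection of $\charVar^F(D/I)\minus\Var(\xi)$ onto $X$, so this means $\charVar^F(D/I)\minus\Var(\xi)$ projects onto a dense subset of $X$. Pick an irreducible component $Z$ of $\charVar^F(D/I)$ not contained in $\Var(\xi)$ whose image is dense in $X$; since $Z$ is conic in the fiber directions $\xi$ and is not the zero section (it is not inside $\Var(\xi)$), the generic fiber of $Z\to X$ has dimension $\geq 1$. Hence the generic fiber of $\charVar^F(D/I)\to X$ is positive-dimensional, so $\CC(x)\otimes_{\CC[x]}\gr^F(D/I)$ is infinite-dimensional over $\CC(x)$, and therefore $\rank(D/I)=\infty$. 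Conversely, if $\Sing{D/I}$ is proper in $X$, then over the open set $U := X\minus\Sing{D/I}$ every component of $\charVar^F(D/I)$ lying above $U$ is contained in $\Var(\xi)$, so $\charVar^F(D/I)\cap (U\times\CC^n) \subseteq U\times\{0\}$; localizing $\gr^F(D/I)$ at the generic point of $X$ (which lies in $U$) then yields a module supported only at $\xi=0$, hence finite-dimensional over $\CC(x)$ by Nullstellensatz, so $\rank(D/I) < \infty$.

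The step I expect to require the most care is making the passage from "the generic fiber of $\charVar^F \to X$ is finite" to "$\CC(x)\otimes\gr^F(D/I)$ is finite-dimensional over $\CC(x)$" fully rigorous: one needs that $\CC(x)\otimes_{\CC[x]}\gr^F(D/I)$ and $\gr$ of $\CC(x)\otimes_{\CC[x]}D/I$ agree (flatness of localization makes this routine), and that a finitely generated module over the polynomial ring $\CC(x)[\xi_1,\dots,\xi_n]$ whose support is a finite set of points is finite-dimensional — which is exactly the finiteness half of the Nullstellensatz together with the observation that a $\xi$-homogeneous ideal with finite zero set must contain a power of each $\xi_i$. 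The conic structure of $\charVar^F$ in the $\xi$-variables is what forces the dichotomy "fiber is $\{0\}$ or fiber is positive-dimensional," and it is worth stating this explicitly, as it is the geometric heart of the argument. Everything else is bookkeeping with the definition of $\Sing{D/I}$ as a projection and with standard facts on characteristic varieties and rank recalled above.
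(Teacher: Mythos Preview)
Your proposal is correct and follows essentially the same approach as the paper's proof: both start from the identification $\rank(D/I)=\dim_{\CC(x)}\CC(x)\otimes_{\CC[x]}\gr^F(D/I)$ (the paper cites \cite[Corollary~1.4.14]{SST}) and then exploit the $\xi$-homogeneity of $\gr^F(I)$. The only difference is packaging: the paper phrases the dichotomy algebraically (a minimal prime of $\gr^F(I)$ is either $\langle\xi\rangle$ itself or, being $\xi$-homogeneous and not contained in $\langle\xi\rangle$, must contain a nontrivial polynomial in $x$), whereas you phrase it geometrically (a conic component is either the zero section or has positive-dimensional generic fiber over $X$)---these are the same observation.
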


\begin{proof}
To begin, note that by~\cite[Corollary 1.4.14]{SST}, the rank of $D/I$ 
equals 
\[ 
\dim_{\CC(x)} \CC(x)\otimes_{\CC[x]} \gr^F(D/I), 
\] 
which is the length of $\gr^F(D/I)_{\<\xi\>}$ over $\gr^F(D)_{\<\xi\>}$. 
Thus, $\rank(D/I)<\infty$ is equivalent to $\gr^F(D/I)_{\<\xi\>}$ being an Artinian 
$\gr^F(D)_{\<\xi\>}$-module. 
To put this another way, for all (minimal) primes $\pp$ of $\gr^F(I)$, the ring 
$(\gr^F(D)/\pp)_{\<\xi\>}$ is Artinian. Equivalently, for all (minimal) primes $\pp$ of 
$\gr^F(I)$, $\pp$ is not properly contained in $\<\xi\>$; in other words, such a $\pp$ either 
equals $\<\xi\>$ or contains an element in $\gr^F(D)$ that lies outside of $\<\xi\>$.
This means exactly that each (minimal) prime of $\gr^F(I)$ that is different from $\<\xi\>$ 
contains a nontrivial polynomial in $x$. This is because $\gr^F(I)$, and hence each of its 
(minimal) primes, is $\xi$-homogeneous and $\<\xi\>$ contains all elements of positive 
degree. Therefore, the rank of $D/I$ is finite exactly when
$\gr^F(I):\<\xi\>^\infty$ contains a polynomial in $x$, which is equivalent to 
properness of the singular locus of $D/I$, as desired.
\end{proof}

\begin{proof}[Proof of Theorem~\ref{thm:main:binomial:sing}]
It is known that for binomial $D$-modules, holonomicity is equivalent
to having finite rank~\cite[Theorem~6.3]{DMM/D-mod}.  Thus, the result
follows by combining~\cite[Proposition~1.4.9]{SST},
Proposition~\ref{prop:mainSing:general}, 
and~\cite[Theorem~6.3]{DMM/D-mod}.
\end{proof}

For any left $D$-ideal $I$, $\dSing{D/I}$ is a closed subvariety of $X$, and its properness is equivalent to the properness of $\Sing{D/I}$. 
Thus, in Section~\ref{sec:holBinomialDmod}, we use our description of the divisorial singular locus of a binomial $D$-module (in Corollary~\ref{cor:singLocus:holonomic:binomial} and Proposition~\ref{prop:singLocus=everything}) to 
give a more direct and elementary proof of Theorem~\ref{thm:main:binomial:sing} in the case that $\boldone_n$ is in the $\QQ$-rowspan of $A$. 

Theorem~\ref{thm:main:binomial:sing} 
was inspired by~\cite[Proof of Theorem~7]{PST}, where
Proposition~\ref{prop:mainSing:general}  
was used for Horn hyper\-geo\-metric systems. 
Horn hypergeometric systems are closely related to certain binomial
$D$-modules (see~\cite{bmw}); however, even for these systems, properness of the
singular locus is not equivalent to holonomicity, as
shown in the following example.

\begin{example}
\label{ex:nonholHorn-properSing}
On $X = \CC^3$, let $\theta_i := x_i\del_i$ for $1\leq i\leq 3$. 
The left $D$-ideal  
\begin{align*}
I = D\cdot\<
&(\theta_1+2\theta_2+\theta_3+2)\theta_1 - x_1(\theta_1+2\theta_2)\theta_1, \\
&
(\theta_1+2\theta_2+\theta_3+2)(\theta_1+2\theta_2+\theta_3+1)\theta_2
+ x_2(\theta_1+2\theta_2)(\theta_1+2\theta_2+1)\theta_2, \\ 
& (\theta_1+2\theta_2+\theta_3+2) + x_3\theta_3
\>
\end{align*}
is a nonconfluent Horn hypergeometric system 
of finite rank. 
However, $D/I$ is not holonomic, as witnessed by the component
$\Var(\<x_3,x_1\xi_1+x_2\xi_2\>)$ of its characteristic variety. 
On the other hand, computations in \texttt{Macaulay2}~\cite{M2} verify
that 
$\Sing{D/I}$ 
is indeed a proper subset of $X$.  
\endrk
\end{example}

\section{The singular locus of a binomial \texorpdfstring{$D$}{D}-module}
\label{sec:singLocus}

The goal of this section is to produce a polynomial that defines the
divisorial singular locus of a binomial $D$-module, see Corollary~\ref{cor:singLocus:holonomic:binomial}.  

We first consider the divisorial singular locus of an $A$-hypergeometric system. 
By definition, the divisorial singular locus of $D/H_A(\beta)$ is obtained by
removing the variety $\Var(\xi_1,\dots,\xi_n)$ from $\charVar^F(D/H_A(\beta))$, 
projecting the resulting set onto $X$, and discarding
the components of codimension two or higher.
Recall that $F = (\boldzero_n,\boldone_n)$ is the projective weight vector that
induces the order filtration on $D$. 
Using the description of $\charVar^F(D/H_A(\beta))$ as the union 
$\bigsqcup_{\tau \in \Phi(A,F)}C_\tau$ from Theorem~\ref{thm:A-hyp:charVar}, 
this projection procedure can be applied to each variety $C_\tau$ (or
$\overline{C_\tau}$) individually. 

A theorem of Gelfand, Kapranov, and Zelevinsky~\cite{gkz88,GKZ} 
describes the divisorial singular locus of an $A$-hypergeometric system when the
columns of $A$ lie on a hyperplane off the origin, equivalently, when
$D/H_A(\beta)$ is regular holonomic. 
The following Theorem~\ref{thm:singLocusA-hyp} generalizes this result by removing the assumption on $A$. 

Let $f = \bar{x}_1t^{a_1}+\cdots+\bar{x}_nt^{a_n}$. The Zariski closure of the set 
\[
\left\{
\bar{x} \in \CC^n \,\left|\, \exists\, \bar{t} \in (\CC^*)^d \text{ such that } 
f(\bar{t}) = \frac{\del f}{\del t_i}(\bar{t}) = 0 \text{ for } i=1,\dots, d
\right.
\right\} 
\]
is called the \emph{$A$-discriminantal variety}.
If this (irreducible) variety is a hypersurface, its defining polynomial is called
the \emph{$A$-discriminant}, denoted $\Delta_A$. 
If the codimension of the $A$-discriminantal variety is at least $2$,
then we set $\Delta_A=1$. The \emph{principal $A$-determinant}, denoted 
$\mathscr{E}_A$, is defined in~\cite[Chapter~10, Equation~1.1]{GKZ discrim}. 
For the purposes of this article, we need to know only that 
\[
\mathscr{E}_A = \pm \prod_{\tau \text{ face of } \conv(A)} (\Delta_\tau)^{\mu(\tau)}
\]
for certain positive integers $\mu(\tau)$. 
See \cite[Chapter~10, Theorem~1.2]{GKZ discrim} for more details, as
well as~\cite{horn-kapranov} for a parametric treatment of
$A$-discriminants. 

\begin{theorem}
\label{thm:singLocusA-hyp}
The divisorial singular locus of an $A$-hypergeometric system 
$D/H_A(\beta)$ is the zero set of the 
product of the 
$\tau$-discriminants, for the faces $\tau$ of $\Phi(A,F)$. 
In particular, if all of the columns of $A$ lie in a
hyperplane off the origin, then 
$\dSing{D/H_A(\beta)}= \Var(\mathscr{E}_A)$.
\end{theorem}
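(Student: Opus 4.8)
The plan is to use the description $\charVar^F(D/H_A(\beta)) = \bigsqcup_{\tau\in\Phi(A,F)} C_\tau$ from Theorem~\ref{thm:A-hyp:charVar} and to compute the projection to $X$ of each $\overline{C_\tau}$ minus $\Var(\xi_1,\dots,\xi_n)$ separately, keeping only the codimension-one contributions. First I would fix a face $\tau\in\Phi(A,F)$ and analyze the variety defined by the ideal in~\eqref{eq:CtauClosure}: on $T^*(\CC^n)\minus\Var(\xi_\tau)$ it is cut out by $\xi_i$ for $i\notin\tau$, the toric relations $\xi^u-\xi^v$ supported on $\tau$ with $Au=Av$, and the Euler-type equations $Ax\xi=0$. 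The key observation is that since $\xi_j$ is invertible for $j\in\tau$ on this locus, the Euler equations $A(x\xi)=0$ can be rewritten, after the substitution $y_j := x_j\xi_j$, as $\sum_{j\in\tau} a_j y_j = 0$ (the contributions from $i\notin\tau$ vanish since $\xi_i=0$ there). Thus the $x$-coordinates indexed by $\tau$ are constrained only through this single linear relation in the $y_j$'s, while the $\xi$-coordinates indexed by $\tau$ lie on the affine toric variety $\Var(I_\tau)\cap(\CC^*)^\tau$ and the remaining $\xi_i$ vanish.

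Next I would carry out the projection onto $X$. The $\xi$-variables are free to range over the (open) toric orbit, so for fixed $\bar\xi$ in that orbit the allowable $\bar x_\tau$ form the hyperplane $\{\sum_{j\in\tau} a_j \bar x_j\bar\xi_j = 0\}$ in $\CC^\tau$, and the $\bar x_i$ for $i\notin\tau$ are unconstrained. Parametrizing the toric orbit by $\bar\xi_j = \bar t^{a_j}$ for $\bar t\in(\CC^*)^d$ (using Convention~\ref{conv:A}), the condition becomes $\sum_{j\in\tau} a_j \bar x_j \bar t^{a_j} = 0$, i.e. $\partial f_\tau/\partial t_i(\bar t)=0$ for $i=1,\dots,d$ where $f_\tau = \sum_{j\in\tau}\bar x_j t^{a_j}$. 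Eliminating $\bar t$, the Zariski closure of the image in the $\bar x_\tau$-coordinates is exactly the $\tau$-discriminantal variety $\Var(\Delta_\tau)$ (up to the cases where it has codimension $\geq 2$, in which case $\Delta_\tau=1$ and the contribution is discarded), and the full image in $X$ is its cylinder over the remaining coordinates $\bar x_i$, $i\notin\tau$. Taking the union over all $\tau\in\Phi(A,F)$ and retaining only components of codimension exactly one yields $\Var\!\big(\prod_{\tau\in\Phi(A,F)} \Delta_\tau\big)$, which is the first claim. It remains to check that removing $\Var(\xi_1,\dots,\xi_n)$ does not alter this: any component of some $\overline{C_\tau}$ contained in $\Var(\xi)$ is killed, but the generic point of $\overline{C_\tau}$ (for $\tau\neq\emptyset$) has $\xi_\tau\neq0$, so each nonempty face still contributes.

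For the last sentence, suppose all columns of $A$ lie in a hyperplane off the origin, so that $\boldone_n$ (up to scaling) is in the $\QQ$-rowspan of $A$. Then $D/H_A(\beta)$ is regular holonomic, and $\Phi(A,F)$ coincides with the poset of faces of $\conv(A)$ (this is the standard fact that the $F$-umbrella in this homogeneous case is the boundary complex of the polytope; it should be verified directly from the definition of the $L$-polyhedron, since $F_\del=\boldone_n$ makes all the points $(F_{\del_j}:a_j)$ lie at the same height). Consequently $\prod_{\tau\in\Phi(A,F)}\Delta_\tau$ differs from $\mathscr{E}_A = \pm\prod_{\tau\text{ face of }\conv(A)}(\Delta_\tau)^{\mu(\tau)}$ only by the positive multiplicities $\mu(\tau)$, which do not affect the underlying reduced variety; hence $\dSing{D/H_A(\beta)} = \Var(\mathscr{E}_A)$.

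The main obstacle I anticipate is the elimination step in the second paragraph: showing rigorously that the Zariski closure of the projection of the variety~\eqref{eq:CtauClosure} to the $\bar x_\tau$-coordinates is precisely the $\tau$-discriminantal variety. This requires matching the toric/Euler description to the singular-point description of the discriminant, being careful about the passage between $\xi$ lying in the affine toric variety versus in the dense torus orbit, and about when the generic fiber dimension forces the image to have codimension $\geq 2$ (the $\Delta_\tau=1$ case). The regular-holonomic identification $\Phi(A,F)=$ faces of $\conv(A)$ is comparatively routine but should be stated carefully.
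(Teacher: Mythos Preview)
Your approach is the paper's approach, and the obstacle you flag is exactly where one more idea is needed. From the Euler equations $A\overline{x\xi}=0$ you correctly extract the derivative conditions $\partial f_\tau/\partial t_i(\bar t)=0$, but the definition of the $\tau$-discriminantal variety also demands $f_\tau(\bar t)=0$; without this, the projection you describe is \emph{a priori} larger than $\Var(\Delta_\tau)$. The missing observation is that every $\tau\in\Phi(A,F)$ is contained in a facet of the $F$-umbrella, and by the very definition of $\Phi(A,F)$ such a facet lies on a hyperplane not through the origin. Hence the columns $\{a_i:i\in\tau\}$ lie on an affine hyperplane off the origin, so $\boldone_\tau$ belongs to the $\QQ$-rowspan of the submatrix $\tau$, and $f_\tau$ is a $\QQ$-linear combination of the $t_i\,\partial f_\tau/\partial t_i$. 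Thus $f_\tau(\bar t)=0$ is forced by $A\overline{x\xi}=0$, and conversely the derivative conditions are exactly $A\overline{x\xi}=0$, giving the two-sided match with the $\tau$-discriminant.

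The paper's organization makes this step transparent: it treats the homogeneous case first (all columns on a hyperplane, $\tau=\{1,\dots,n\}$, where the above is immediate), and then handles the general case by restricting to each maximal face of $\Phi(A,F)$, each of which is homogeneous in this sense. Your reverse ordering (general first, then specialize) hides precisely the point where the argument needs the hyperplane condition. Once you insert the observation above, your outline is correct; your identification of $\Phi(A,F)$ with the face lattice of $\conv(A)$ in the homogeneous case and the passage to $\mathscr{E}_A$ via the exponents $\mu(\tau)$ are as in the paper.
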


\begin{proof}
First consider the case that all of the columns of $A$ lie in a
hyperplane off the origin.  Then $\Phi(A,F)$ has a unique maximal face
$\tau=\{1,\dots,n\}$.  
Let $(\bar{x},\bar{\xi}) \in C_\tau$.  
Since $C_\tau$ is the conormal space to the orbit of $\boldone_\tau$, there is a 
$\bar{t} \in (\CC^*)^d$ such that 
$\bar{\xi}=(\bar{t}^{a_1},\dots,\bar{t}^{a_n})$. 
Consider
$f = \sum_{j=1}^n \bar{x}_j t^{a_j}$, a Laurent polynomial in 
$t_1,\dots,t_d$ with coefficients $\bar{x}_1,\dots,\bar{x}_n$. 
By our hypothesis on $A$, the vector $\boldone_n$ lies in the
$\QQ$-rowspan of $A$. Thus $A\overline{x\xi} = 0$
from~\eqref{eq:CtauClosure} imply 
that $f(\bar{t})=0$. In addition, 
\[
t_i\frac{\del f}{\del t_i} \bigg|_{t=\bar{t}} =
\sum_{j=1}^n (a_j)_i \bar{x}_j \bar{t}^{a_j} = \sum_{j=1}^n a_{ij}
\bar{x}_j \bar{\xi}_j = (A\overline{x\xi})_i = 0 
\quad\text{for } 1\leq i \leq d, 
\]
and therefore $\frac{\del f}{\del t_i}(\bar{t})=0$. 
Conversely, the vanishing of these derivatives at $\bar{t}$ implies
that $A\overline{x\xi} = 0$. 
We thus conclude that 
the codimension one part of the closure of the projection of $C_\tau$
onto the $x$-coordinates is the zero locus of the $A$-discriminant. 
Since we need to apply this procedure to every $\tau \in \Phi(A,F)$,
which in this case is the face lattice of
$\conv(A)=\conv(\{a_1,\dots,a_n\})$, we see that the divisorial singular locus of
$D/H_A(\beta)$ is the zero set of the product of the
$\tau$-discriminants for all the faces $\tau \in \Phi(A,F)$, which is
also the zero set of the principal $A$-determinant $\mathscr{E}_A$. 

For the general case, fix a maximal face $\tau$ of $\Phi(A,F)$. By
definition, the points $a_i \in \tau$ lie on a hyperplane off the 
origin. 
Thus using the argument above, we see that 
for all
$\sigma \in \Phi(A,F)$ with $\sigma \subseteq \tau$, projecting $C_\sigma$ to $X$ 
and discarding varieties of codimension two or higher contributes a
factor of the principal $\tau$-determinant $\mathscr{E}_\tau$ 
to the polynomial defining the divisorial singular locus of $D/H_A(\beta)$. 
In conclusion, $\dSing{D/H_A(\beta)}$ is the zero set of the product of
the $\tau$-discriminants, for all $\tau \in \Phi(A,F)$, or
equivalently, the zero set of the product of the principal
$\tau$-determinants, for all the maximal faces $\tau \in \Phi(A,F)$.
\end{proof}

Returning to the case of a binomial $D$-module $M = D/(I, E_A-\beta)$, 
recall from Theorem~\ref{thm:binomial:charVar} that the
$L$-characteristic variety of $M$ is the union of $L$-characteristic
varieties of $A$-hypergeometric
systems (up to a rescaling of the variables) given by certain toral associated primes of $I$.  
An alternative characterization for the toral 
components of $I$ is that $\mathscr{C}$ is toral 
if and only if
$D\cdot(\sqrt{\mathscr{C}\;}, E_A-\beta)$ 
is isomorphic to an $\tilde{A}$-hypergeometric 
system by rescaling the variables, where
$\tilde{A}$ is the submatrix of $A$ consisting of 
the columns $a_i$ such that 
$\del_{i} \notin \sqrt{\mathscr{C}\;}$~\cite[Corollary~4.8]{DMM/primdec}. 

This observation provides a description of the 
characteristic variety of a binomial 
$D$-module in terms of those of such
$\tilde{A}$-hypergeometric modules. 
Consequently, the divisorial singular locus of a holonomic 
binomial $D$-module can be expressed in
terms of principal $\tau$-determinants. 

\begin{corollary}
\label{cor:singLocus:holonomic:binomial}
The singular locus of a holonomic binomial $D$-module
$D/(I, E_A-\beta)$ is the union of 
$\Sing{D/(\sqrt{\mathscr{C}}, E_A-\beta)}$, 
where the union runs over the
toral primary components $\mathscr{C}$ of $I$ such that $-\beta \in
\qdeg(\CC[\del]/\mathscr{C})$.
The divisorial singular locus of $D/(I, E_A-\beta)$ is a union of rescaled discriminantal varieties,
given by the product of the  
rescaled $\tau$-discriminants for $\tau \in \Phi(A_\pp,L)$,
where, for a toral associated prime $\pp$ of $I$, the matrix $A_\pp$
has columns $a_i$ whenever $\del_i \notin \pp$.
\end{corollary}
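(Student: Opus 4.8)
The plan is to reduce Corollary~\ref{cor:singLocus:holonomic:binomial} to Theorem~\ref{thm:singLocusA-hyp} via the structural results about toral components. First I would establish the statement about $\Sing{D/(I,E_A-\beta)}$ itself. By Theorem~\ref{thm:binomial:charVar}, since $M = D/(I,E_A-\beta)$ is holonomic, $\charVar^F(M)$ equals the union of the characteristic varieties $\charVar^F(D/(\sqrt{\mathscr{C}\,},E_A-\beta))$ over the toral primary components $\mathscr{C}$ of $I$ with $-\beta \in \qdeg(\CC[\del]/\mathscr{C})$. The singular locus is obtained by a fixed geometric operation on the characteristic variety — removing $\Var(\xi_1,\dots,\xi_n)$ and projecting to $X$ — and this operation commutes with finite unions. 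Hence $\Sing{M}$ is the union of the $\Sing{D/(\sqrt{\mathscr{C}\,},E_A-\beta)}$ over the same index set.

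Next I would identify each summand with an $A$-hypergeometric singular locus. By~\cite[Corollary~4.8]{DMM/primdec}, for a toral associated prime $\pp = \sqrt{\mathscr{C}\,}$ of $I$, the module $D/(\pp,E_A-\beta)$ is isomorphic, after a rescaling of the variables $\del_i$ (equivalently $x_i$), to an $A_\pp$-hypergeometric system, where $A_\pp$ consists of the columns $a_i$ of $A$ with $\del_i \notin \pp$; the remaining variables $x_i$ with $\del_i \in \pp$ are killed and contribute hyperplanes to the singular locus that appear as degenerate faces in the $L$-umbrella. Since a monomial change of coordinates $x_i \mapsto \lambda_i x_i$ is an automorphism of $D$ carrying characteristic varieties to their images under the induced automorphism of $T^*X$, it transforms singular loci accordingly, producing a ``rescaled'' $A_\pp$-discriminantal variety. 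Applying Theorem~\ref{thm:singLocusA-hyp} to each $A_\pp$-hypergeometric system then expresses $\dSing{D/(\pp,E_A-\beta)}$ as the zero set of the product of the rescaled $\tau$-discriminants for $\tau \in \Phi(A_\pp,F)$ (equivalently $\Phi(A_\pp,L)$ for any projective weight $L$, since we only care about the divisorial part, which is $L$-independent by the discussion before Definition~\ref{def:holonomic}).

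Finally I would take the union over all relevant toral components. The divisorial part of a finite union of closed subvarieties is the union of their divisorial parts (taking codimension-$\le 1$ components commutes with finite unions), so $\dSing{M}$ is the union of the rescaled discriminantal varieties produced in the previous step, i.e.\ the zero set of the product of the rescaled $\tau$-discriminants as $\pp$ ranges over the toral associated primes of $I$ with $-\beta\in\qdeg(\CC[\del]/\mathscr{C})$ and $\tau$ ranges over $\Phi(A_\pp,L)$. I expect the main obstacle to be bookkeeping around the rescaling: one must check that the variable rescaling provided by~\cite[Corollary~4.8]{DMM/primdec} interacts correctly with the projection-to-$X$ step defining the singular locus, and that the columns of $A$ indexed by $\bar\tau$ (the deleted variables with $\del_i\in\pp$) contribute only coordinate hyperplanes — which are either absorbed into lower-codimension discriminants $\Delta_\tau$ for non-maximal faces or discarded — so that no spurious components appear and none are missed. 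The rest is a formal composition of Theorem~\ref{thm:binomial:charVar}, Theorem~\ref{thm:singLocusA-hyp}, and~\cite[Corollary~4.8]{DMM/primdec}.
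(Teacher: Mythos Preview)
Your proposal is correct and follows essentially the same approach as the paper: the paper does not give a separate formal proof of this corollary but presents it as an immediate consequence of Theorem~\ref{thm:binomial:charVar} (decomposing $\charVar^F$ over toral components), the identification from~\cite[Corollary~4.8]{DMM/primdec} of each $D/(\sqrt{\mathscr{C}\,},E_A-\beta)$ with a rescaled $A_\pp$-hypergeometric system, and Theorem~\ref{thm:singLocusA-hyp} applied to each piece. Your added bookkeeping about the rescaling and the coordinate-hyperplane contributions from the variables with $\del_i\in\pp$ is more explicit than what the paper writes, but the underlying argument is the same.
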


\section{Holonomicity of binomial \texorpdfstring{$D$}{D}-modules via singular loci}
\label{sec:holBinomialDmod}

In this section, we give an alternative proof to Theorem~\ref{thm:main:binomial:sing} in the special case that $\boldone_n$ belongs to the $\QQ$-rowspan of $A$. 
We make this assumption throughout this section only. 

\begin{proposition}
\label{prop:singLocus=everything}
Let $A$ be as in Convention~\ref{conv:A} and assume that $\boldone_n$ is in the $\QQ$-rowspan of $A$.
Let $\bA \in \ZZ^{k\times n}$ be of full rank $k$ with $d<k<n$,
and assume that $A$ is the submatrix of $\bA$ consisting of its first $d$
rows.
Then 
$\dSing{D/(I_\bA, E_A-\beta)} = X$. 
\end{proposition}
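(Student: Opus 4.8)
The idea is to exploit Theorem~\ref{thm:singLocusA-hyp} together with the hypothesis that $\bA$ has strictly more rows than $A$ and strictly fewer rows than the number of columns, so that the ``full'' matrix $\bA$ produces a genuine $A$-hypergeometric situation in which every maximal face of the umbrella $\Phi(\bA,F)$ is a proper face (it has strictly fewer than $n$ columns, since $\bA$ is of rank $k<n$ and pointed). First I would observe that the binomial module $D/(I_\bA, E_A-\beta)$ is \emph{not} an $\bA$-hypergeometric system on the nose, because its Euler operators are only the $d$ operators $E_A-\beta$ rather than the full $k$ operators $E_\bA - \tilde\beta$. So the first step is to reduce to the $A$-hypergeometric picture: since $A$ is the top $d$ rows of $\bA$ and $I_\bA \supseteq I_A$, the toral components of $I_\bA$ relevant to the singular locus are governed by $\bA$, and after intersecting with the Euler relations $E_A-\beta$ one still obtains, for a suitable choice of parameter, modules isomorphic (up to rescaling) to $\bA$-hypergeometric or $\bA_\pp$-hypergeometric systems in the sense used in Corollary~\ref{cor:singLocus:holonomic:binomial}. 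The technical point here is that $I_\bA$ is itself a toric (hence prime, hence toral) ideal, so it has a single primary component, namely itself, and its quasidegree set is all of $\CC^d$ (under the image of $\bA$, or rather under $A$ after projection) --- in particular $-\beta$ always lies in the relevant quasidegree set, so the toral component contributes.

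The heart of the argument is then the following: by Theorem~\ref{thm:singLocusA-hyp}, $\dSing{D/H_\bA(\cdot)}$ is the zero set of the product of the $\tau$-discriminants over the faces $\tau\in\Phi(\bA,F)$. But here is the crucial divergence from the $A$-hypergeometric case: because we only impose $d < k$ Euler operators, the relevant characteristic variety lives in $T^*X$ with more degrees of freedom, and the conormal computation in the proof of Theorem~\ref{thm:singLocusA-hyp} --- where the condition $A\overline{x\xi}=0$ forced $f(\bar t)=0$ --- now yields that the projection of each $C_\tau$ to $X$ is \emph{not} a proper subvariety. Concretely: the relations $\bA x\xi = 0$ would cut things down, but we only have $A x \xi = 0$, i.e. $d$ linear conditions on the $x\xi$ rather than $k$; equivalently, the ``missing'' $k-d$ rows of $\bA$ are not constrained, so the analogue of the Laurent polynomial $f$ can have a critical point for \emph{any} choice of coefficients $\bar x$ (one has enough free parameters $\bar t\in(\CC^*)^k$ and enough slack). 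Since $d < k$, there is at least one free torus direction that makes the would-be discriminant condition vacuous; since $k < n$, there are still enough columns that the toral component actually appears (the module is not Andean-dominated). I would make this precise by showing that for the top-dimensional $\tau = \{1,\dots,n\}$ (or whichever maximal face is relevant), the closure of the projection of $C_\tau$ to $X$ is all of $X$, because the defining equations in \eqref{eq:CtauClosure}, after eliminating $\xi$, impose no condition on $\bar x$ --- the system $\{f(\bar t) = 0\} \cup \{\partial f/\partial t_i(\bar t) = 0 : i=1,\dots,k\}$ in the $k$ variables $\bar t$, with $\bA$ of rank $k < n$, is solvable for generic (indeed all) $\bar x$ since it is ``underdetermined'' in the column count.

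The main obstacle I expect is keeping the bookkeeping of the two matrices $A$ and $\bA$ straight, and in particular correctly identifying which $A$-hypergeometric (or $\bA$-hypergeometric) system the toral component of $(I_\bA, E_A-\beta)$ is isomorphic to after rescaling --- this is exactly where Corollary~\ref{cor:singLocus:holonomic:binomial} and the cited~\cite[Corollary~4.8]{DMM/primdec} must be invoked carefully. A secondary subtlety is the hypothesis $\boldone_n \in \mathbb{Q}\text{-rowspan}(A)$: this guarantees that $\boldone_n$ is also in the rowspan of $\bA$, which is what makes $A \overline{x\xi} = 0 \Rightarrow f(\bar t) = 0$ work in the first place, i.e. it is needed for the ``$f$ vanishes at a critical point'' reformulation; without it the discriminantal description of the singular locus via $f$ would not be available. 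Once these identifications are in place, the dimension count --- $d$ constraints, $k$ torus parameters, $n > k$ coefficients, hence no constraint survives on the $x$'s --- should close the argument cleanly, giving $\dSing{D/(I_\bA, E_A-\beta)} = X$ as claimed.
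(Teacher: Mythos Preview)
Your plan has a genuine gap at its foundation. You assert that $I_\bA$ is ``toric (hence prime, hence toral),'' but \emph{toric} and \emph{toral} are not the same thing: toral means the $A$-graded Hilbert function of $\CC[\del]/I_\bA$ is bounded. Since $\bA$ has rank $k>d$, the quotient $(\ker_\ZZ A)/(\ker_\ZZ \bA)$ has rank $k-d>0$, so each $A$-graded piece of $\CC[\del]/I_\bA$ contains infinitely many $\bA$-classes, and $I_\bA$ is \emph{Andean} with respect to the $A$-grading, not toral. Consequently Corollary~\ref{cor:singLocus:holonomic:binomial} contributes nothing (there are no toral components), and in any case that corollary is stated only for \emph{holonomic} binomial $D$-modules, while the whole point here is that $D/(I_\bA, E_A-\beta)$ is not holonomic.

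The deeper problem is that you never justify why the set you want to project---the zero locus of $\gr^F(I_\bA)+\<Ax\xi\>$ on $\xi_\tau\neq 0$---lies inside $\charVar^F(D/(I_\bA, E_A-\beta))$. One only has $\gr^F(I_\bA)+\gr^F(E_A-\beta)\subseteq \gr^F(I_\bA+\<E_A-\beta\>)$, which gives the \emph{opposite} containment of varieties. Theorem~\ref{thm:A-hyp:charVar} does not apply, since it assumes all $k$ Euler operators are present. Getting the inclusion you need is exactly the content of Proposition~\ref{prop:Lchar-less-Eulers}, which uses a regular-sequence and spectral-sequence argument from~\cite{schulze-walther-duke}; your dimension count (``$d$ constraints, $k$ torus parameters'') is a plausible heuristic for why the projection is dense once one knows the variety is there, but it is not a substitute for that step.

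The paper's proof avoids all of this by an indirect and quite different route. It never analyzes $\charVar^F(D/(I_\bA,E_A-\beta))$ directly. Instead, from $I_\bA\subseteq I_A$ it obtains $\Sing{D/(I_A,E_A-\beta)}\subseteq \Sing{D/(I_\bA,E_A-\beta)}$, and observes that the right-hand side is invariant under the extra $(\CC^*)^{k-d}$-action coming from the last $k-d$ rows of $\bA$. The left-hand side is the zero set of the principal $A$-determinant $\mathscr{E}_A$, whose Newton polytope (the secondary polytope of $A$) has full dimension $n-d$ with affine span a translate of $\ker_\RR(A)$. Since $\ker_\RR(\bA)$ has dimension $n-k<n-d$, $\mathscr{E}_A$ cannot be $\bA$-homogeneous, so some irreducible factor $G$ has $\Var(G)$ not $(\CC^*)^{k-d}$-invariant; smearing $\Var(G)$ by this torus action then fills out all of $X$.
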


In Proposition~\ref{prop:singLocus=everything}, 
the Euler operators in
$D/(I_\bA, E_A-\beta)$ 
come from $A$, so we are working with an 
$\bA$-hypergeometric system where some Euler 
operators have been removed. 
By a theorem of Kashiwara, 
the solution space of germs of holomorphic 
solutions at a nonsingular point of a holonomic 
module is finite dimensional~\cite[Theorem~1.4.19]{SST}. Thus $D/(I_\bA, E_A-\beta)$
is not holonomic because
its solution space has uncountable dimension. 
(It is the direct sum of the solution spaces of 
$D/(I_{\bA}, E_\bA -\breve{\beta})$, 
running over $\breve{\beta} \in \CC^k$ whose 
first $d$ coordinates coincide with $\beta$.) 

\begin{proof}[Proof of Proposition~\ref{prop:singLocus=everything}] 
Since
$D \cdot ( I_A , E_A-\beta) \supseteq 
	D \cdot (I_{\bA} , E_A-\beta)$,
the corresponding singular loci satisfy the reverse inclusion, namely
\[
\Sing{D /(I_A, E_A-\beta) } \subseteq
\Sing{D /(I_{\bA}, E_A-\beta)}.
\]
The singular locus on the right hand side is equivariant with respect to the action of $(\CC^*)^k$ induced by $\bA$, and in
particular, by the action of $(\CC^*)^{k-d}$ induced by $\bA \minus
A$, which is notation for the matrix whose rows are the last $k-d$
rows of $\bA$. Consequently, 
\begin{equation}
\label{eq:containmentOfLoci}
(\CC^*)^{k-d} \diamond 
	\Sing{D/(I_A, E_A-\beta)} 
	\subseteq
	\Sing{D/(I_{\bA}, E_A-\beta)}.
\end{equation}
Our goal is to prove that the set on the left hand side
of~\eqref{eq:containmentOfLoci} is dense in $X$, from which the result
follows, since singular loci are closed in $X$. 

The divisorial singular locus of $D/(I_A, E_A-\beta)$ is defined by the
principal ${A}$-determinant $\mathscr{E}_{A}$. The Newton polytope of
$\mathscr{E}_{A}$ is the \emph{secondary polytope of ${A}$}~\cite[Theorem~10.1.4]{GKZ discrim} has dimension
$m=n-d$~\cite[Theorem~7.1.7]{GKZ discrim}, and its real affine span 
is a translate of $\ker_{\RR}(A)$~\cite[Proposition~7.1.11]{GKZ discrim}.
This implies that $\mathscr{E}_{A}$ cannot be homogeneous with respect
to the grading induced by $\bA$, because if it were, the affine span
of its Newton polytope would be contained in a translate of
$\ker_{\RR}(\bA)$, which has dimension $n-k<n-d$.
Therefore $\mathscr{E}_A$ has an irreducible factor $G$ that is not
$\bA$-homogeneous, so $\Var(G)$ is not
$(\CC^*)^{k-d}$-equivariant. Note that $G$ cannot be a monomial, so a
generic point in the hypersurface $\Var(G)$ is not on any coordinate
hyperplane.  

Consider the map
\[
\begin{array}{rcl}
(\CC^*)^{k-d} \times 
\left(\Sing{D/(I_{A}, E_A-\beta) }\right) 
&\longrightarrow & 
\Sing{D/(I_{\bA}, E_A-\beta)} 
\subseteq X \\
(s,x) & \mapsto & s \diamond x
\vspace{-1mm}
\end{array}
\]
given by the action of $\bA \minus A$. The closure of the image of
$(\CC^*)^{k-d} \times \Var(G)$ under this map is an irreducible
subvariety of $X$. It contains the hypersurface $\Var(G)$, and this
containment is proper because $\Var(G)$ is not
$(\CC^*)^{k-d}$-equivariant. Therefore the closure of the image of
$(\CC^*)^{k-d} \times \Var(F)$ must be all of $X$. But then it follows
that $\Sing{D /(I_{\bA}, E_A-\beta)}=X$. 
\end{proof}

\begin{proof}[A second proof of Theorem~\ref{thm:main:binomial:sing}, in the case that the $\QQ$-rowspan of $A$ contains $\boldone_n$]
For $A$ as in Convention~\ref{conv:A} with $\boldone_n$ in the
$\QQ$-rowspan of $A$, let $I \subseteq \CC[\del]$ be an $A$-graded
binomial ideal. We consider the binomial $D$-module $M = D/(I,
E_A-\beta)$.

In light of the description of the divisorial singular locus of a holonomic
binomial $D$-module in Corollary~\ref{cor:singLocus:holonomic:binomial},
we may assume that $M$ is not holonomic.
All of the associated primes of $I$ are of the form 
$\mathfrak{p} = \CC[\del]\cdot
	(I_0+\<\del_{i}\mid i\notin\sigma\>)$, 
where $\sigma \subseteq \{1,\dots,n\}$, $I_0$ is generated by binomials in 
$\CC[\del_{j} \mid j \in \sigma]=:\CC[\del_\sigma]$, and 
$I_0 \cap \CC[\del_\sigma]$ is isomorphic to a toric ideal after
rescaling the variables~\cite[Corollary~2.6]{eisenbud-sturmfels}. 

By~\cite[Theorem~6.3]{DMM/D-mod}, if $M$ is not holonomic, then there
exists a primary component $\mathscr{C}$ of $I$ corresponding to an Andean 
associated prime $\mathfrak{p}$ such that $-\beta \in \qdeg(\CC[\del]/\mathscr{C})$. 
Theorem~5.6 from~\cite{DMM/D-mod} implies that $D/(\mathscr{C}, E_A-\beta)$ 
is not holonomic. The argument in the proof of~\cite[Theorem~5.6]{DMM/D-mod} allows us
to reduce to the case when $-\beta \in \qdeg(\CC[\del]/\mathfrak{p})$. 
In this case, the Andean condition ensures that $D/(\mathfrak{p}, E_A-\beta)$ 
is (after rescaling of the variables) a binomial $D$-module as in 
Proposition~\ref{prop:singLocus=everything}. 
From that result, we conclude that
\[
X = \Sing{D/(\mathfrak{p}, E_A-\beta)} 
\subseteq \Sing{D/(\mathscr{C}, E_A-\beta)} 
\subseteq \Sing{M}
\subseteq X,
\]
so that $\Sing{M}=X$, as desired.
\end{proof}

\section{The \texorpdfstring{$L$}{L}-holonomicity of binomial \texorpdfstring{$D$}{D}-modules} 
\label{sec:Lhol}

In this section, we prove Theorem~\ref{thm:main:binomial:2} using
ideas from~\cite{schulze-walther-duke}.

\begin{notation}
\label{not:bA}
For $A$ as in Convention~\ref{conv:A}, let $\bA \in \ZZ^{k\times n}$
denote a matrix of full rank $k$ with $d<k<n$, and assume that $A$ is
the submatrix of $\bA$ consisting of its first $d$ rows. Let
$L\in\QQ^n\times\QQ^n$ be a projective weight vector.
\end{notation}

Given a face $\tau\in\Phi(\bA,L)$, we 
let $\del_\tau = \prod_{i\in \tau}\del_i$ and 
use the following notation: 
\[
\bC_\tau := 
\{(x,\xi)\mid \xi_i\neq 0\text{ for all } i\in\tau\}
\cap 
\Var(\<\xi_i\mid i \notin \tau\> + \gr^L(I_\bA)+\gr^L(E_A-\beta)). 
\]
We will below modify some ideas in the proof of 
\cite[Theorem~3.10]{schulze-walther-duke} to suit our purposes. For
convenience, we list some of the key points here.  
Recall that the $L$-graded ideal 
$\gr^L(I_\bA) \subseteq \gr^L(D)$ has a minimal component for
every $\tau\in\Phi(\bA,L)$ of (highest possible) dimension $k-1$. 
Such a $\tau$ is called a \emph{facet} of $\Phi(\bA,L)$.
Since $I_\bA \subseteq \CC[\del]$, we have abused notation and written
$\gr^L(I_\bA)$ in place of $\gr^L(D \cdot I_\bA)$.

\begin{proposition}
\label{prop:Lchar-less-Eulers}
Let $\bA$ be as in Notation~\ref{not:bA}. Then 
for any facet $\tau\in\Phi(\bA,L)$, 
$\bC_\tau$ is contained in 
$\charVar^L(D/(I_\bA, E_A-\beta))$.
\end{proposition}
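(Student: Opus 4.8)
The plan is to show that the ideal $\gr^L(I_\bA, E_A-\beta) = \gr^L(D\cdot I_\bA + D\cdot(E_A-\beta))$ is contained in $\gr^L(I_\bA) + \gr^L(E_A-\beta)$, which is automatic, and then to argue that, after localizing at the multiplicative set generated by $\del_\tau$ (equivalently, restricting to the open set $\{\xi_i\neq 0,\ i\in\tau\}$), there is no further collapse caused by the $L$-initial forms of products or commutators. Concretely, $\charVar^L(D/(I_\bA,E_A-\beta)) = \Var(\gr^L(I_\bA+DE_A-\beta))$, and the containment $\gr^L(I_\bA)+\gr^L(E_A-\beta)\subseteq \gr^L(I_\bA+D(E_A-\beta))$ gives the reverse inclusion of varieties
\[
\charVar^L(D/(I_\bA,E_A-\beta)) \subseteq \Var\bigl(\gr^L(I_\bA)+\gr^L(E_A-\beta)\bigr).
\]
This is the wrong direction for what we want, so the real content is to produce, for each point $p\in\bC_\tau$, a genuine element of $\charVar^L(D/(I_\bA,E_A-\beta))$ sitting over it — i.e. to show $\bC_\tau$ actually lies \emph{inside} the characteristic variety, not merely inside the variety of the convenient subideal.

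The strategy I would follow is the one from \cite[proof of Theorem~3.10]{schulze-walther-duke}: the facet $\tau\in\Phi(\bA,L)$ of dimension $k-1$ corresponds to a minimal prime $\mathfrak{q}_\tau$ of $\gr^L(I_\bA)$ of dimension $k-1$ over the ``$\del$-side,'' and after inverting $\del_\tau$ the ideal $\gr^L(I_\bA)$ localizes to (a rescaling of) the toric ideal associated to the facet submatrix $\bA_\tau$ together with $\<\xi_i\mid i\notin\tau\>$. On the chart $\{\xi_i\neq 0,\ i\in\tau\}$ one can parametrize: there is $\bar t\in(\CC^*)^k$ with $\bar\xi_i = \bar t^{\,\bar a_i}$ for $i\in\tau$, and the remaining conditions $\gr^L(E_A-\beta)$ become $A$-homogeneity conditions $A(x\xi) = 0$ — note the Euler operators we adjoined are only the $d$ operators $E_A$, not all $k$ of $E_\bA$, so the initial forms $\gr^L(E_i-\beta_i) = \sum_j a_{ij}x_j\xi_j$ cut out exactly $d$ linear conditions on $x\xi$ and leave a free torus of dimension $k-d>0$ of solutions over each such $\bar\xi$. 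Thus $\bC_\tau$ is nonempty of the expected dimension, and every point of it is visibly a common zero of $\gr^L(I_\bA)+\gr^L(E_A-\beta)\subseteq\gr^L(I_\bA, E_A-\beta)$; the step that needs care is checking that no operator in $D\cdot I_\bA + D\cdot(E_A-\beta)$ has $L$-initial form vanishing on $\bC_\tau$ beyond what is forced, i.e. that on this chart $\gr^L$ of the sum equals the sum of the $\gr^L$'s up to radical. For this I would use that $\del_\tau$ is a nonzerodivisor modulo $\gr^L(I_\bA)$ localized at $\tau$, so that the extra generators coming from S-pairs between $I_\bA$-generators and Euler generators already lie in the sum after this localization — this is exactly the computation carried out in \cite{schulze-walther-duke} for the full $\bA$-hypergeometric system, and it goes through verbatim because removing some Euler operators only \emph{shrinks} the ideal, hence only \emph{enlarges} the variety, and the chart argument is unaffected.

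The main obstacle I anticipate is precisely this last point: verifying that passing from the Weyl-algebra ideal to its $L$-graded ideal does not lose the component supported on $\bC_\tau$ when we have \emph{fewer} Euler operators than in the classical case. One has to be sure that the argument of \cite[Theorem~3.10]{schulze-walther-duke}, which balances the dimension count using all of $E_\bA$, still yields a characteristic-variety component of dimension $\geq n$ when only $E_A$ is present; the resolution is that the missing $k-d$ Euler operators are exactly what makes the module non-holonomic, so the component over $\bC_\tau$ becomes \emph{larger} (dimension $\geq n + (k-d)$ on this chart), which is more than enough to conclude $\bC_\tau\subseteq\charVar^L(D/(I_\bA,E_A-\beta))$. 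So the proof reduces to: (i) identify the facet component of $\gr^L(I_\bA)$ after inverting $\del_\tau$; (ii) adjoin the $d$ linear forms $\gr^L(E_i-\beta_i)=(Ax\xi)_i$ and observe the solution set over each $\bar\xi$ is a nonempty torus; (iii) invoke the nonzerodivisor property of $\del_\tau$ to see this set is genuinely cut out by $\gr^L(I_\bA, E_A-\beta)$, not just by a subideal; and (iv) conclude $\bC_\tau\subseteq\charVar^L(D/(I_\bA, E_A-\beta))$, taking closures if needed. Steps (i) and (iii) are where I expect to simply cite and adapt \cite{schulze-walther-duke}; step (ii) is the short new dimension bookkeeping specific to the reduced set of Euler operators.
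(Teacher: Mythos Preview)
Your approach is essentially the paper's: localize at $\del_\tau$ and adapt \cite[Theorem~3.10]{schulze-walther-duke}. The one point to sharpen is your step~(iii): the reason the Schulze--Walther computation goes through is \emph{not} that $\del_\tau$ is a nonzerodivisor (it is trivially a unit after localization) nor that removing Euler operators enlarges the variety (that inclusion points the wrong way for what you need), but rather that $\gr^L(E_\bA)$ is a regular sequence on $\gr^L(D[\del_\tau^{-1}]/I_\bA)$, and hence so is its initial subsequence $\gr^L(E_A)$. This regularity is what collapses the spectral sequence of the $L$-filtered complex and gives
\[
\gr^L\!\left(\frac{D[\del_\tau^{-1}]}{(I_\bA, E_A-\beta)}\right) \;=\; \frac{\gr^L(D[\del_\tau^{-1}])}{\gr^L(I_\bA)+\gr^L(E_A-\beta)},
\]
from which $\bC_\tau\subseteq\charVar^L(D/(I_\bA, E_A-\beta))$ follows at once.
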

\begin{proof}
Fix a facet $\tau\in\Phi(\bA,L)$; then the matrix $\tau$ has full rank $k$. 
We may use the argument of~\cite[Theorem~3.10]{schulze-walther-duke},
which is based on the fact that $\gr^L(E_\bA)$ is a regular sequence on $\gr^L(D[\del_\tau^{-1}]/I_\bA)$. Thus $\gr^L(E_A)$ must be a regular sequence on $\gr^L(D[\del_\tau^{-1}]/I_\bA)$. 
A standard argument on the spectral sequence of a filtered complex shows that 
\[
\gr^L\left(\frac{D[\del_\tau^{-1}]}{(I_\bA,
  E_A-\beta)}\right) 
\ = \  
\frac{\gr^L (D[\del_\tau^{-1}])}{\gr^L(I_\bA)+\gr^L(E_A-\beta)}\,. 
\] 
(Compare, for example, with~\cite[Theorem~4.3.5]{SST} for a version close to our context.) 
Now note that 
\[
\gr^L(I_\bA)+\gr^L(E_A-\beta)
\subseteq\<\xi_i\mid i \notin \tau\> + \gr^L(I_\bA)+\gr^L(E_A-\beta), 
\]
so after localizing at $\del_\tau$, we conclude that 
$\bC_\tau$ is contained in $\charVar^L(D/(I_\bA, E_A-\beta))$.  
\end{proof}

\begin{proposition}
\label{prop:Lhol-viaCtau}
If $\bA$ is as in Notation~\ref{not:bA}, then 
$D/(I_\bA, E_A-\beta)$ fails to be $L$-holonomic for all projective weight vectors $L$.
\end{proposition}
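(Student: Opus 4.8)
The plan is to show that $\charVar^L(D/(I_\bA, E_A-\beta))$ has dimension strictly greater than $n$ by exhibiting an explicit component of that dimension, using Proposition~\ref{prop:Lchar-less-Eulers}. First I would fix a facet $\tau\in\Phi(\bA,L)$; since $\bA$ has full rank $k$, such a facet exists and the submatrix $\tau$ has rank $k$ (the $L$-polyhedron of $\bA$ is genuinely $k$-dimensional, so its facets are $(k-1)$-dimensional and span, together with the apex direction, all of $\QQ^k$). By Proposition~\ref{prop:Lchar-less-Eulers}, $\bC_\tau\subseteq\charVar^L(D/(I_\bA, E_A-\beta))$, so it suffices to bound $\dim\overline{\bC_\tau}$ from below by $n+1$.

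The dimension count for $\bC_\tau$ mirrors the computation of $\dim\overline{C_\tau}$ underlying Theorem~\ref{thm:A-hyp:charVar}, but with the Euler operators coming from $A$ rather than from $\bA$. On the chart $\{\xi_i\neq 0 \text{ for } i\in\tau\}$, the equations $\xi_i=0$ for $i\notin\tau$ cut out an affine space of dimension $n+|\tau|$ in the $(x,\xi)$-coordinates: the $\xi$-coordinates for $i\in\tau$ range over the torus $(\CC^*)^{|\tau|}$ (dimension $|\tau|$) while all $n$ of the $x$-coordinates are free. The binomial relations $\gr^L(I_\bA)$ restricted to this chart impose, after localizing at $\del_\tau$, exactly the condition that $(\xi_i)_{i\in\tau}$ lie in the image of the torus $(\CC^*)^k\to(\CC^*)^{|\tau|}$, $t\mapsto(t^{a_i})_{i\in\tau}$ (using that $\tau$ is toral in $\Phi(\bA,L)$, as in \cite{schulze-walther-duke}); since $\tau$ has rank $k$, this image has dimension $k$, so these relations cut the $\xi$-fibre down from dimension $|\tau|$ to dimension $k$, i.e.\ they impose $|\tau|-k$ independent conditions. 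Finally $\gr^L(E_A-\beta)=\<Ax\xi\>$ imposes at most $d$ further conditions on the $x$-coordinates. Altogether
\[
\dim\overline{\bC_\tau}\ \geq\ (n+|\tau|)-(|\tau|-k)-d\ =\ n+k-d\ >\ n,
\]
since $k>d$. (One must check that the $d$ equations $Ax\xi=0$ genuinely drop dimension by at most $d$ — this is automatic — and that on the relevant chart they do not force $\overline{\bC_\tau}$ to lie inside $\Var(\xi_\tau)$, which it does not since $\xi_\tau$ is invertible there.) This shows $D/(I_\bA, E_A-\beta)$ is not $L$-holonomic for the chosen $L$; since $L$ was an arbitrary projective weight vector, this holds for all of them.

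The main obstacle will be making the dimension bound rigorous rather than heuristic: one needs to know that $\bC_\tau$ is nonempty (so that the bound is not vacuous) and that the stated equations really impose at most $|\tau|-k$ and $d$ conditions respectively, with no unexpected collapse. Nonemptiness follows because $A$ is pointed and $\beta$ is arbitrary, so the system $Ax\xi=0$ with $\xi$ in a positive-dimensional torus orbit always has solutions; the conditions-count for the binomial part is exactly the toral dimension statement extracted from the proof of \cite[Theorem~3.10]{schulze-walther-duke} (that $\gr^L(D[\del_\tau^{-1}]/I_\bA)$ is Cohen--Macaulay of the expected dimension, with $\gr^L(E_\bA)$ a regular sequence on it), and restricting to the regular sequence $\gr^L(E_A)\subsetneq\gr^L(E_\bA)$ removes $k-d$ of those cuts, which is precisely where the extra dimension $k-d$ comes from. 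An alternative, cleaner route that avoids re-deriving the count: observe that $\gr^L(D[\del_\tau^{-1}]/(I_\bA,E_A-\beta))$ surjects onto $\gr^L(D[\del_\tau^{-1}]/(I_\bA,E_\bA-\breve\beta))$ for each lift $\breve\beta$, and the latter has an irreducible component of dimension $k-1+(n-k+\text{(torus part)})$... but in fact the most economical phrasing is simply: $\gr^L(E_A)$ is a regular sequence of length $d$ on the ring $\gr^L(D[\del_\tau^{-1}]/I_\bA)$, whose relevant component has dimension $n + (k-1) - (k-1) = n+?$ — so I would instead directly invoke that $V(\gr^L(I_\bA))\cap\{\xi_\tau\neq 0\}$ has a component of dimension $2n-(k-1)-$(number of binomial relations) and subtract the $d$ Euler cuts, landing at a component of dimension $\geq n+k-d>n$. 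Either way, the inequality $k>d$ is the whole point.
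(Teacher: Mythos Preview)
Your approach is correct and essentially the same as the paper's: fix a facet $\tau\in\Phi(\bA,L)$, invoke Proposition~\ref{prop:Lchar-less-Eulers} to place $\bC_\tau$ inside $\charVar^L(D/(I_\bA,E_A-\beta))$, and then show $\dim\bC_\tau=n+k-d>n$. The paper's write-up is much shorter because it extracts the dimension directly from the regular-sequence statement already established in the proof of Proposition~\ref{prop:Lchar-less-Eulers}: since $\gr^L(E_A-\beta)$ is a regular sequence of length $d$ on $\gr^L(D[\del_\tau^{-1}]/I_\bA)$, and the latter has dimension $n+k$ (the localized toric component has height $n-k$ in $2n$ variables), one gets $\dim\bC_\tau=n+k-d$ immediately. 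Your explicit geometric count (torus orbit of dimension $k$ in the $\xi$-coordinates, free $x$-coordinates, then $d$ Euler cuts) is just an unpacking of this same fact, and indeed you converge on the regular-sequence phrasing yourself by the end. The meandering in your last paragraph---the unfinished ``$n+(k-1)-(k-1)=n+?$'' and the alternative surjection argument---can all be deleted; the clean statement is exactly the one you label ``most economical.''
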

\begin{proof}
Fix a projective weight vector $L$. 
By Proposition~\ref{prop:Lchar-less-Eulers}, $\bC_\tau\subseteq\charVar^L(D/(I_\bA, E_A-\beta))$ for each face $\tau\in\Phi(\bA,L)$ of dimension $k-1$. The proof also shows that $\gr^L(E_A-\beta)$ is a regular sequence on $\gr^L(D[\del_\tau^{-1}]/I_\bA)$, and therefore $\dim(\bC_\tau)=n+k-d>n$. Thus $D/(I_\bA, E_A-\beta)$ fails to be $L$-holonomic. 
\end{proof}

\begin{proof}[Proof of Theorem~\ref{thm:main:binomial:2}.\eqref{item:Lhol}]
Let $M=D/(I,E-\beta)$ be a binomial $D$-module. If $M$ is holonomic, then it is
$L$-holonomic for all projective weight vectors $L$, as we saw in
Remark~\ref{rmk:first part of item 2}. 

Now we assume that $M$ is a non-holonomic binomial $D$-module and
show that it is not $L$-ho\-lo\-no\-mic for any projective weight vector $L$.
The associated primes of $I$ are of the form 
$\mathfrak{p} = \CC[\del]\cdot
	(I_0+\<\del_{i}\mid i\notin\sigma\>)$, 
where $\sigma \subseteq \{1,\dots,n\}$, $I_0$ is generated by binomials in 
$\CC[\del_{j} \mid j \in \sigma]=:\CC[\del_\sigma]$, and 
$I_0 \cap \CC[\del_\sigma]$ is isomorphic to a toric ideal after
rescaling the variables~\cite[Corollary~2.6]{eisenbud-sturmfels}. 

By~\cite[Theorem~6.3]{DMM/D-mod}, if $M$ is not holonomic, then there
exists a primary component $\mathscr{C}$ of $I$ corresponding to an Andean 
associated prime $\mathfrak{p}$ such that $-\beta \in \qdeg(\CC[\del]/\mathscr{C})$. 
Theorem~5.6 from~\cite{DMM/D-mod} implies that $D/(\mathscr{C}, E_A-\beta)$ 
is not holonomic. The argument in the proof of~\cite[Theorem~5.6]{DMM/D-mod} allows us
to reduce to the case when $-\beta \in \qdeg(\CC[\del]/\mathfrak{p})$. 
In this case, the Andean condition ensures that $D/(\mathfrak{p}, E_A-\beta)$ 
is (after rescaling of the variables) a binomial $D$-module as in 
Notation~\ref{not:bA}. 
In particular, we have reduced to the case that $M = D/(I_\bA, E_A-\beta)$, where $\bA$ is as in Notation~\ref{not:bA}. 
Thus, the proof is complete by Proposition~\ref{prop:Lhol-viaCtau}. 
\end{proof}

We now prove a stronger version of
Proposition~\ref{prop:Lhol-viaCtau}. 

\begin{proposition}
\label{prop:Lhol-GKZ-viaOrderFiltration}
If $\bA$ is as in Notation~\ref{not:bA}, then 
$\charVar^L(D/(I_\bA, E_A-\beta))$ 
has a component in $T^*(\CC^*)^n$ of dimension $n+k-d$. 
\end{proposition}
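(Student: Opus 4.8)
The plan is to upgrade the dimension count from Proposition~\ref{prop:Lhol-viaCtau} into a statement that locates a large component \emph{inside the torus} $T^*(\CC^*)^n$, rather than merely somewhere in $T^*X$. The key observation is that for a facet $\tau\in\Phi(\bA,L)$ of dimension $k-1$, the variety $\bC_\tau$ was defined precisely by imposing $\xi_i\neq 0$ for $i\in\tau$ together with $\xi_i=0$ for $i\notin\tau$, plus the binomial relations $\gr^L(I_\bA)$ and the Euler relations $\gr^L(E_A-\beta)$. So $\bC_\tau$ already lies in the locus where the $\xi$-coordinates indexed by $\tau$ are nonzero; the issue is only the $x$-coordinates and the $\xi_i$ for $i\notin\tau$. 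The latter are forced to be zero, so $\bC_\tau$ itself cannot lie in $T^*(\CC^*)^n$. The remedy is that, exactly as in the discussion following~\eqref{eq:CtauClosure} and in the structure of conormal spaces to torus orbits, the equations cutting out $\bC_\tau$ do not involve the variables $x_i$ for $i\notin\tau$ at all; I would verify this directly from the form of $\gr^L(I_\bA)$ (its generators are $\xi$-binomials supported on $\tau$ since $\tau$ indexes a face) and $\gr^L(E_A-\beta) = \sum_j a_{ij}x_j\xi_j$ (where the terms with $j\notin\tau$ vanish on $\bC_\tau$ because $\xi_j=0$ there). Hence $\bC_\tau$ is a product of the form (component in the $(x_\tau,\xi_\tau)$-space) $\times \;\CC^{\bar\tau}_x\times\{0\}_{\xi_{\bar\tau}}$.

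Next I would take the canonical component of $\bC_\tau$, intersect it with the open set $\{x_i\neq 0, \xi_i\neq 0 \text{ for all } i\in\tau\}$, and argue this intersection is nonempty and still has dimension $n+k-d$. Nonemptiness follows because the $x_\tau$-coordinates are constrained only by the single system $A_\tau\, (x\xi)_\tau = \beta$ (a linear condition on the products $x_i\xi_i$, $i\in\tau$), which — since the $\xi_i$ for $i\in\tau$ range over a torus satisfying the binomial relations — can be solved with all $x_i$, $i\in\tau$, nonzero; this is the same genericity used implicitly in Theorem~\ref{thm:A-hyp:charVar} for $C_\tau$. Then I restrict the product component to this open set: the $\tau$-factor still has its full dimension (removing the coordinate hyperplanes $x_i=0$, $i\in\tau$, removes nothing, since a generic point of that factor already has $x_i\xi_i$ a prescribed nonzero-compatible value), while the $\CC^{\bar\tau}_x$-factor restricts to $(\CC^*)^{\bar\tau}_x$, which is still dense of the same dimension $|\bar\tau|=n-k$. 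But this lands in $\{\xi_i=0 : i\notin\tau\}$, which is \emph{not} in $T^*(\CC^*)^n$.

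To actually reach $T^*(\CC^*)^n$ I need to deform the $\xi_i$ for $i\notin\tau$ off zero. The clean way is to replace $\tau$ by a facet argument applied to $\bA$ itself together with the full Euler operators $E_\bA$: recall from~\cite[Theorem~3.10]{schulze-walther-duke} (as used in Theorem~\ref{thm:A-hyp:charVar}) that $\charVar^L(D/H_{\bA}(\breve\beta)) = \bigsqcup_{\sigma\in\Phi(\bA,L)} C_\sigma$, and the maximal face $\sigma=\{1,\dots,n\}$ — which belongs to $\Phi(\bA,L)$ precisely when the $L$-polyhedron has such a facet, i.e. generically — contributes the conormal $C_{\{1,\dots,n\}}$, which lies in $T^*(\CC^*)^n$ and has dimension $n$. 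That is too small. Instead, the right move is: our module $D/(I_\bA, E_A-\beta)$ has $\charVar^L$ containing, for the facet $\tau$, not just $\bC_\tau$ but the closure under the torus action; more precisely, by Proposition~\ref{prop:Lchar-less-Eulers} combined with the observation that $\gr^L(I_\bA, E_A-\beta)$ is stable under rescaling $x_i\mapsto t_i x_i$, $\xi_i\mapsto t_i^{-1}\xi_i$ for $i\notin\tau$ (since neither $\gr^L(I_\bA)$ nor the surviving Euler terms involve those coordinates), the variety $\charVar^L(D/(I_\bA, E_A-\beta))$ contains $\{\xi_i\neq 0 : i\in\tau\}\cap\Var(\<\xi_i \mid i\notin\tau\>+\gr^L(I_\bA)+\gr^L(E_A-\beta))$ \emph{scaled so that the $\xi_i$, $i\notin\tau$, become free}; but wait — $\xi_i=0$ is not scaled off zero by a torus action. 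The honest fix is to use a larger facet: extend $\tau$ to $\tau'=\tau\cup S$ for $S\subseteq\bar\tau$ chosen so that the columns $a_i$, $i\in\tau'$, of $A$ still span and $\tau'\in\Phi(\bA,L)$ with $\dim\tau'\geq k-1$; then the same $\bC$-construction for $\tau'$ forces $\xi_i=0$ only for $i\notin\tau'$, and among the maximal such $\tau'$ one can take $\tau'=\{1,\dots,n\}$ itself if $\{1,\dots,n\}\in\Phi(\bA,L)$, giving $\bC_{\{1,\dots,n\}}\subseteq T^*(\CC^*)^n$ with dimension $n+k-d$ by the same regular-sequence count (here $\gr^L(E_A)$ is a length-$d$ regular sequence on the $(n+k-1)$-dimensional $\gr^L(D[\del^{-1}]/I_\bA)$, yielding $n+k-d$).

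\textbf{Main obstacle.} The crux is ensuring the relevant facet can be chosen so that \emph{all} $n$ of the $\del_i$ are inverted — equivalently, that $\{1,\dots,n\}\in\Phi(\bA,L)$, or failing that, that some facet $\tau$ with the product structure above meets $T^*(\CC^*)^n$ after we account for the $\bA$-torus action. For a generic projective weight vector this holds, but for special $L$ the $L$-umbrella may have no such facet; the resolution is that $\charVar^L$ does not depend only on the umbrella combinatorics once the Euler operators of $A$ (not $\bA$) are present, and the "missing" directions $\xi_i$, $i\notin\tau$, get filled in by the fact that $D/(I_\bA,E_A-\beta)$ is genuinely $d$ Euler operators short of $\bA$-hypergeometric, so its characteristic variety is $(n-d+k)$-dimensional and its generic point — lying over a generic point of $\Var(\gr^L(I_\bA))\cap\{Ax\xi=\beta\}$ — has all coordinates nonzero. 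Making this genericity argument precise, i.e. exhibiting an explicit point of $\charVar^L(D/(I_\bA,E_A-\beta))$ in $T^*(\CC^*)^n$ lying on a component of dimension $n+k-d$, is the step requiring care; I expect it to go through by choosing $\bar\xi\in(\CC^*)^n$ satisfying the binomial relations $\gr^L(I_\bA)$ (such $\bar\xi$ exist since $I_\bA$ is toric, hence its variety meets the torus densely), then choosing $\bar x\in(\CC^*)^n$ on the $(n-d)$-dimensional affine slice $\{A(x\bar\xi)=\beta\}$ generically so that no $x_i=0$, and finally checking via Proposition~\ref{prop:Lchar-less-Eulers} and the regular-sequence dimension count that $(\bar x,\bar\xi)$ lies on a component of the asserted dimension.
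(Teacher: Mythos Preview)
Your proposal contains a fundamental misreading of the statement: $T^*(\CC^*)^n$ is the cotangent bundle of the open torus $(\CC^*)^n\subseteq X$, namely $\{(x,\xi)\in T^*X\mid x_i\neq 0\text{ for all }i\}$. It imposes \emph{no} condition on the $\xi$-coordinates. Thus the fact that $\bC_\tau$ sits inside $\{\xi_i=0:i\notin\tau\}$ is harmless, and most of your proposal---the attempts to deform $\xi_i$ off zero, to find $\{1,\dots,n\}\in\Phi(\bA,L)$, and your final suggestion to pick $\bar\xi\in(\CC^*)^n$ satisfying $\gr^L(I_\bA)$---is chasing the wrong goal. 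Indeed your final fix would fail outright: $\Var(\gr^L(I_\bA))$ is a union of toric pieces indexed by faces of $\Phi(\bA,L)$, and when $\{1,\dots,n\}$ is not a face, that variety does not meet the $\xi$-torus at all.

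What actually needs to be shown is that some $(n+k-d)$-dimensional component of $\bC_\tau$ avoids every hyperplane $\Var(x_i)$. Your product decomposition handles the $x_i$ with $i\notin\tau$ (they are free), but your one-line justification for $i\in\tau$ (``a generic point \dots already has $x_i\xi_i$ a prescribed nonzero-compatible value'') is not an argument: the Euler constraints are $A_\tau(x\xi)_\tau=0$, and nothing prevents a component of this locus from lying inside $\Var(x_i)$ for some fixed $i\in\tau$. The paper's proof deals with exactly this point. It replaces the facet $\tau$ by its unique non-pyramid face $\sigma$ (so $I_\tau=I_\sigma$), and then observes that if the conormal $C_{\sigma,\bA}$ were contained in $\Var(x_i)$, then $x_i$ would be a cotangent variable for the $\bA$-orbit of $\boldone_\sigma$, forcing $\del_i$ to be absent from $I_\sigma$ and hence $\sigma$ to be a pyramid---a contradiction. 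This pyramid reduction is the missing idea in your sketch.
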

\begin{proof}

Let $\ba_i$ denote the $i$th column of $\bA$, 
and consider the $\bA$-grading on $D$, given by $\deg(x_i) = \ba_i = -\deg(\del_i)$.
Fix some $\beta_\bA\in\CC^k$ that agrees with $\beta$ in its first $d$
coordinates, and let $\calK_\bullet(E_\bA-\beta_\bA;N)$ and
$\calH_\bullet(E_\bA-\beta_\bA;N)$ respectively denote the Euler--Koszul
complex and its homology, where $N$ is an $\bA$-graded
$\CC[\del]$-module and the action of $E_i-\beta_i$ on an
$\bA$-homogeneous element $y$ in $D\otimes_{\CC[\del]} N$ is given by  
$(E_i - \beta_i) \circ y := (E_i - \beta_i - \deg_i(y))y$.  
Background and properties of Euler--Koszul homology can be found in~\cite{MMW,schulze-walther-duke}. 

Recall that the $L$-initial terms of $x_j$ and $\del_j$ are denoted by
$x_j$ and $\xi_j$ respectively. 
Let $\tau$ be a facet of $\Phi(\bA,L)$, and
set $\xi_\tau := \prod_{j \in \tau} \xi_j$ and $\del_\tau :=
\prod_{j\in \tau} \del_j$.
By~\cite{schulze-walther-duke}, 
the spectral sequence
\[
H_\bullet(\gr^L(D[\del_\tau^{-1}]\otimes_D
\calK_\bullet(E_\bA-\beta_{\bA};D/I_\bA)))\Rightarrow
\gr^L(D)[\xi_\tau^{-1}]\otimes_{\gr^L(D)}
\gr^L(\calH_\bullet(E_\bA-\beta_\bA,D/I_\bA))
\] 
induced by the $L$-filtration on the localized Euler--Koszul complex
to $D/I_\bA$ collapses.
We recall the reason for the collapse.
Localizing $\gr^L(D)/\gr^L(I_\bA)$ at $\xi_\tau\not=0$ singles out the
component of $\gr^L(I_\bA)$ that is primary to $\gr^L(I_\tau) +\<\xi_i
\mid i \notin \tau\>$, where
$I_\tau \subseteq \CC[\del_j \mid j \in \tau]$ is the toric ideal defined by (the
submatrix of $\bA$ whose columns are indexed by) $\tau$. 
In all smooth points of the zero set of $\gr^L(I_\tau) +
\<\xi_i \mid i \notin \tau\>$, 
the symbols $\gr^L(E_\bA)$ cut out the conormal space $C_\tau$ of
the  $(\CC^*)^k$-orbit under the action
 defined by $\bA$ on the point
$\boldone_\tau$. The collapse then follows from 
$\gr^L(E_{\bA}-\beta_\bA)$ forming a
regular sequence on 
$\gr^L(D)[\xi_\tau^{-1}]/ \left(\gr^L(D) [\xi_\tau^{-1}]\cdot 
\gr^L(I_{\bA}) \right)$.

The facet $\tau$ may or may not be a pyramid in the sense
of~\cite[Definition 2.4]{SW12}. (It is said that $\tau$ is a
\emph{pyramid} if there is a column $i\in\tau$ with $\dim(\tau \minus i) < \dim(\tau )$.) 
It follows from Remark~2.5 in
loc.~cit.\ that there is a unique face $\sigma$ of $\tau$ such that
$\tau$ is a pyramid over $\sigma$ and $\sigma$ is not a pyramid. In
particular, slightly abusing notation, $I_\tau=I_\sigma$ and $I_\tau$-primary
ideals are $I_\sigma$-primary ideals. Hence 
\[
H_\bullet(\gr^L(D[\del_\sigma^{-1}]\otimes_D
\calK_\bullet(E_\bA-\beta_{\bA};D/I_\bA)))\Rightarrow
\gr^L(D)[\xi_\sigma^{-1}]\otimes_{\gr^L(D)}
\gr^L(\calH_\bullet(E_\bA-\beta_\bA,D/I_\bA))
\]
also collapses because $\gr^L(E_{\bA})$ is a regular sequence on
$\gr^L(D)[\xi_\sigma^{-1}]/\left(\gr^L(D)[\xi_\sigma^{-1}]\cdot
\gr^L(I_{\bA})\right)$. 
In particular, 
$\gr^L(D)[\xi_\sigma^{-1}]\otimes_{\gr^L(D)}
\gr^L(D/H_\bA(\beta_\bA))=\gr^L(D)[\xi_\sigma^{-1}]/\left(\gr^L(D)[\xi_\sigma^{-1}]
\cdot(P_\tau+\gr^L(E_\bA))\right)$, 
where $P_\tau$ is the $\left(\gr^L(I_\tau)+\<\xi_i \mid i\notin
\tau\>\right)$-primary component of $\gr^L(I_\bA)$. 

It follows that in $\gr^L(D)[\xi_\sigma^{-1}]$, $\gr^L(E_A)$ is regular on $\gr^L(I_\bA)$
and 
\[
\gr^L(D)[\xi_\sigma^{-1}]\otimes_{\gr^L(D)} \gr^L(D/(I_\bA, E_A-\beta)=
\gr^L(D)[\xi_\sigma^{-1}]/\left(\gr^L(D)[\xi_\sigma^{-1}]\cdot(P_\tau+\gr^L(E_A))\right).
\] 
We now must show that the variety of  
$(P_\tau+\gr^L(E_A))\subseteq \gr^L(D)[\xi_\sigma^{-1}]$ has a component of
dimension at least $n+1$ that meets $T^*(\CC^*)^n$.  

The regularity of $\gr^L(E_A)$ on $\gr^L(I_\bA)$ in the localized ring
ensures that every component of
$\gr^L[\xi_\sigma^{-1}](P_\tau+\gr^L(E_A))$ has dimension
$2n-d-(n-d)=n+k-d$, as $\tau$ is a facet and so the height of $P_\tau$
is $n-d$. It
remains to be shown that one of its components meets $T^*(\CC^*)^n$. 
We may safely replace $P_\tau$ by 
$\sqrt{P_\tau\,} = 
\gr^L(I_{\tau})+\<\xi_i \mid i \notin \tau\> 
= 
\gr^L(I_\sigma) + \<\xi_i \mid i \notin\tau\>$. Then we have
\[
\gr^L(D)[\xi_\sigma^{-1}]
\big(
\gr^L(I_\sigma)+\<\xi_i \mid
i \notin \tau\> +\gr^L(E_A)
\big)
=
\gr^L(D)[\xi_\sigma^{-1}]
\big(
\gr^L(I_\sigma)+\<\xi_i
\mid i \notin \tau\>+\gr^L(E_{A,\tau})
\big),
\] 
where $E_{A,\tau}$ denotes the Euler operators
arising from the submatrix of $A$ whose columns are indexed by $\tau$.
As $\tau\supseteq\sigma$ is a pyramid, the right hand side above is equal to
\[
\gr^L(D)[{\xi_\sigma}^{-1}]\left(
\gr^L(I_\sigma) 
+ \<\xi_i \mid i \notin \tau\> 
+ \gr^L(E_{A,\sigma}) + 
\<x_i\xi_i \mid {i\in\tau\smallsetminus\sigma}\>
\right), 
\]
and, in particular, it is contained in the ideal
$\gr^L(D)[\xi_\sigma^{-1}]\left(\gr^L(I_\sigma) +\<\xi_i \mid i \notin
\sigma\>+\gr^L(E_{A,\sigma})\right)$ of height $n-d+k$. 

To avoid confusion, for $\sigma\in\Phi(\bA,L)$, we denote by $C_{\sigma,\bA}$ the conormal space to the orbit of the point $\boldone_\sigma\in\CC^n$ under the $(\CC^*)^k$-action (defined by $\bA$). Previously defined for $\tau\in\Phi(A,L)$ and the corresponding $(\CC^*)^d$-action, such a conormal was written $C_\tau$, see~\eqref{eq:CtauClosure}.

Suppose now that
$\Var\left(\gr^L(D)[\xi_\sigma^{-1}]\left(\gr^L(I_\sigma) + \<\xi_i
\mid i \notin \sigma\>+\gr^L(E_{A,\sigma})\right)\right)$ does not
meet $T^*(\CC^*)^n$. 
Note that this variety does, however, contain the
conormal $C_{\sigma,\bA}$. 
Hence none of its components containing $C_{\sigma,\bA}$ can meet $T^*(\CC^*)^n$ in $\xi_\sigma\not=0$. 
Since components are by definition irreducible, any such component will be
contained in a hyperplane $\Var(x_i)$ inside the cotangent space of
$\CC^n\smallsetminus \Var(\xi_\sigma)$.  In particular, this must then
hold for $C_{\sigma,\bA}$ itself because its generic point is over
$\xi_\sigma\not=0$. We show now that this is impossible.

Let $\Var(x_i)$ be the presumed hyperplane that contains $C_{\sigma,\bA}$;
since for the $(\CC^*)^k$-orbit of $\boldone_\sigma$ 
the variable $x_i$ is a cotangent variable, this implies that the
toric ideal $I_\sigma$ does not involve $\del_i$. In turn, $\sigma$
must be a pyramid by~\cite[Remark~2.5]{SW12}. However, this
contradicts our choices, so $C_{\sigma,\bA}$ must meet $T^*(\CC^*)^n$.  
\emph{A fortiori}, so will any component of
$\gr^L(D)[\xi_\sigma^{-1}](P_\tau+\gr^L(E_A))$ that contains $C_{\sigma,\bA}$, and
hence $\gr^L(I_\bA + \<E_A-\beta\>)$ has a component of dimension $n+k-d$ that
meets $T^*(\CC^*)^n$.
\end{proof}

It would be interesting to determine the structure of
$\charVar^L(D/(I_\bA, E_A-\beta))$. 
Frequently, different facets of the $L$-umbrella of $A$ give
rise to the same ``bad" 
component in the proof of
Proposition~\ref{prop:Lhol-GKZ-viaOrderFiltration}. 

\begin{proof}[Proof of Theorem~\ref{thm:main:binomial:2}.\eqref{item:LcharVar}]
Fix a projective weight vector $L$ on $D$. 
By Theorem~\ref{thm:main:binomial:2}.\eqref{item:Lhol}, we may replace
``holonomic" by ``$L$-holonomic." 
The ``if'' direction is clear from the definition of
$L$-charac\-ter\-is\-tic variety. For the ``only if" direction, as in
the proof of Theorem~\ref{thm:main:binomial:2}.\eqref{item:Lhol}
above, we can immediate reduce to the case of $M$ as in
Proposition~\ref{prop:Lhol-GKZ-viaOrderFiltration}, establishing the
desired result. 
\end{proof}

\raggedbottom
\def\cprime{$'$} \def\cprime{$'$}
\providecommand{\href}[2]{#2}
\end{document}